\newtheorem{theorem}{Theorem}[section]
\newtheorem{theoremL}{Theorem}[section]
\newtheorem{lemma}[theoremL]{Lemma}
\newtheorem{theoremC}{Theorem}[section]
\newtheorem{conjecture}[theoremC]{Conjecture}
\newtheorem{theoremD}{Theorem}[section]
\newtheorem{definition}[theoremD]{Definition}
\newtheorem{theoremCr}{Theorem}[section]
\newtheorem{corollary}[theoremCr]{Corollary}
\numberwithin{equation}{section}
\newcommand*\circled[1]{\tikz[baseline=(char.base)]{
    \node[shape=circle,draw,inner sep=1pt] (char) {#1};}}
\begin{document}

\title{On twin prime distribution and associated biases}

\author{Shaon Sahoo}
\address{Indian Institute of Technology, Tirupati 517506, India}
\email{shaon@iittp.ac.in}

\keywords{Twin primes, Arithmetic progression, Chebyshev's bias}

\begin{abstract}{
A modified totient function ($\phi_2$) is seen to play a significant role in the study of the twin prime distribution.
The function is defined as $\phi_2(n):=\#\{a\le n ~\vert ~\textrm{$a(a+2)$ is coprime to $n$}\}$
and is shown here to have following product form: $\phi_2(n) = n (1-\frac{\theta_n}{2}) \prod_{p>2,~p\vert n}(1-\frac 2 p)$, where $p$ denotes a prime and
$\theta_n = 0$ or $1$ for odd or
even $n$ respectively. Using this function it is proved for a given $n$ that there always exists a number $m > n$ so 
that $(p, m(m + 2)) = 1$ for every prime $p \le n$. We also establish a Legendre-type formula for the twin prime
counting function in the following form: $\pi_2(x) - \pi_2(\sqrt{x})  = \sum_{ab\vert P(\sqrt{x})}\mu(ab) \left[\frac{x-l_{a,b}}{ab}\right]$, where
$P(z)=\prod_{p\le z}p$ and $a$ is always odd. Here $l_{a,b}$ is the lowest positive integer so that $a\vert l_{a,b}$ and $b\vert (l_{a,b}+2)$.

In the latter part of this work, we discussion three different types of biases in the distribution of twin primes. The first two biases are similar to
the biases in primes as reported by Chebyshev, and Oliver and Soundararajan. Our third reported bias is on the difference ($D$) between (the first members of) 
two consecutive twin primes; it is observed that $D\pm1$ is more likely to be a prime than an odd composite number.
}
\end{abstract}

\maketitle
\tableofcontents

\section{Introduction}
\label{sec1}

Important proven results are scarce in the study of twin primes. The statements about the infinitude of twin primes and corresponding distribution are the well known 
conjectures in this field. Although unproven so far, there are some important breakthroughs towards proving these conjectures. Chen's work \cite{chen73} and 
more recent work by Zhang \cite{zhang14} may be noted in this regard. 

In this work we study some new aspects of the distribution of twin primes; we especially focus on studying the twin primes in arithmetic progressions. 
Many important results are known when primes are studied in residue classes. For example, Dirichlet's work (1837) shows that, if $(a,q) = 1$, there exists 
infinite number of primes $p$ so that $p\equiv a~(\text{mod}~q)$. In analogy with these studies, one can also study twin primes after classifying them according 
to the residues of their first members. In this study of twin primes, a modified totient function ($\phi_2$) is seen to play a role similar to that of the Euler's 
totient function in the study of primes. We analyze this modified totient function and an associated modified M\"{o}bius function ($\mu_2$). 

For the twin primes, we also discuss in this work a sieve similar to the sieve of Eratosthenes and a formula similar to the Legendre's formula for the prime counting
function. We also provide a new heuristics for the twin prime conjecture using the modified totient function $\phi_2$.

While studying primes in residue classes, one of the most interesting results that came out is the biases or, as popularly known, the prime number 
races \cite{granville06}. Chebyshev (1853) first noted that certain residue classes are surprisingly more preferred over others. For example, if $\pi(x;q;a)$ 
denotes the number of primes $\le x$ in the residue class $a$ (mod $q$), it is seen that $\pi(x;4;3)>\pi(x;4;1)$ in most occasions.
A detailed explanation of this bias (Chebyshev's) is given by Rubinstein and Sarnak in their 1994 paper \cite{rubinstein94}.
More recently (2016), a different type of bias was reported by Oliver and Soundararajan \cite{oliver16}. They found that 
a prime of a certain residue class is more likely to be followed by a prime of another residue class. A 
conjectural explanation was presented to understand such bias. 

In the last part of this paper we discuss similar biases in the distribution of the twin prime pairs. In addition, we also report a third type of bias:
if $(\hat{p}_n-\hat{p}_{n-1})$ represents the difference between the first members of two consecutive twin prime pairs, then $(\hat{p}_n-\hat{p}_{n-1}\pm1)$ 
are more likely to be prime numbers than to be composites. We also find that the possibility of $(\hat{p}_n-\hat{p}_{n-1}-1)$ to be a prime is more 
than that of $(\hat{p}_n-\hat{p}_{n-1}+1)$ to be a prime.   
The numerical results presented here are based on the analysis of the first 500 million primes ($\pi(x_0)=5\times 10^8$) and corresponding about 30 
million twin prime pairs ($\pi_2(x_0)=29,981,546$). Here $x_0$ is taken as the last prime we consider, i.e. 11,037,271,757.

Besides these studies, we also state a conjecture regarding
the arithmetic progression of twin primes in the line with the Green-Tao theorem and give some numerical examples of this type of progression.

\section{Notations and definitions}
\label{sec2}

In this paper the letters $i,j,a,b,q,n$ and $N$ represent positive integers. The letters $x,y,z$ and $c$ represent real numbers, and $p$ always represents 
a prime number 
with $p_n$ being the $n$-th prime number. Here $\hat{p}$ represents a twin prime pair whose first member is $p$; $\hat{p}_n$ denotes the $n$-th twin prime pair.
Unless mentioned otherwise, any statement involving a twin prime pair will be interpreted with respected to the first member of the pair, e.g., 
$\hat{p} \le x$ means that the first prime of the pair is less than or equal to $x$, or $\hat{p} \equiv a$ (mod $q$) means that the first member of the 
pair is congruent to $a$ (mod $q$).

We denote gcd of $a$ and $b$ by $(a,b)$. In this paper we denote a reduced residue class of $q$ by a number $a$ if ($a,q$) = 1 and $0<a<q$. A pair of integers are 
called a {\it coprime pair} to a third integer if both the integers are individually coprimes to the third integer; e.g., 6 and 14 form a coprime pair to 25. 
A coprime pair to an integer is called a {\it twin coprime pair} if the integers in the pair are separated by 2. 

The functions $\mu(n)$ and $\phi(n)$ respectively denote the M\"{o}bius function and Euler's totient function. The functions $\omega(n)$ and 
$\omega_o(n)$ denote respectively the number of prime divisors and odd prime divisors of $n$ (for odd $n$, $\omega(n) = \omega_o(n)$, and for even $n$, 
$\omega(n) = \omega_o(n)+1$). \\

\noindent {\it Definitions:}\\
\indent $\phi_2(q):=\#\{a\le q~|(a,q)=1 ~{\rm and}~ (a+2,q)=1\}$.\\
\indent $\mu_2(n):=\mu(n)2^{\omega_o(n)}$.\\ 
\indent $\pi(x):= \#\{p \le x \}$.\\ 
\indent $\pi(x;q;a):= \#\{p \le x ~|p\equiv a (\text{mod}~q)\}$.\\
\indent $\pi(x;q;a_i|a_j):= \#\{p_n \le x ~| p_n\equiv a_i (\text{mod}~q)~\text{and}~p_{n+1}\equiv a_j (\text{mod}~q)\}$.\\
\indent $\delta(x;q;a):= \frac{\pi(x;q;a)}{\pi(x)}$.\\
\indent $\delta(x;q;a_i|a_j):= \frac{\pi(x;q;a_i|a_j)}{\pi(x;q;a_i)}$.\\
\indent $\Delta(x;q;a_i,a_j):= \pi(x;q;a_i) - \pi(x;q;a_j)$.\\
\indent $\bar{\Delta}(x;q;a_i,a_j):= \Delta(x;q;a_i,a_j)\frac{ln(x)}{\sqrt{x}}$.\\
\indent $\pi^{\pm}(x):= \#\{p_n \le x~| (p_{n+1}-p_n \pm 1) ~\text{is a prime} \}$\\
\indent $\delta^{\pm}(x):=\frac{\pi^{\pm}(x)}{\pi(x)}$\\ 
\indent $\pi_2(x):=\#\{\hat{p} \le x\}$.\\
\indent $\pi_2(x;q;a):=\#\{\hat{p} \le x ~| \hat{p}\equiv a (\text{mod}~q)\}$.\\
\indent $\pi_2(x;q;a_i|a_j):= \#\{\hat{p}_n \le x ~| \hat{p}_n\equiv a_i (\text{mod}~q)~\text{and}~\hat{p}_{n+1}\equiv a_j (\text{mod}~q)\}$.\\
\indent $\delta_2(x;q;a):= \frac{\pi_2(x;q;a)}{\pi_2(x)}$.\\
\indent $\delta_2(x;q;a_i|a_j):= \frac{\pi_2(x;q;a_i|a_j)}{\pi_2(x;q;a_i)}$.\\
\indent $\Delta_2(x;q;a_i,a_j):= \pi_2(x;q;a_i) - \pi_2(x;q;a_j)$. \\
\indent $\bar{\Delta}_2(x;q;a_i,a_j):= \Delta_2(x;q;a_i,a_j)\frac{ln^2(x)}{10\sqrt{x}}$.\\
\indent $\pi^{\pm}_2(x):= \#\{\hat{p}_n \le x~| (\hat{p}_{n+1}-\hat{p}_n \pm 1) ~\text{is a prime} \}$\\
\indent $\delta^{\pm}_2(x):=\frac{\pi^{\pm}_2(x)}{\pi_2(x)}$\\

\section{Arithmetic progressions and twin primes}
\label{sec3}

Unlike the case of prime numbers where each reduced residue class has infinite number of primes (Dirichlet's Theorem), 
in case of the twin primes, some reduced residue class may not have more than one prime pair. This is because the existence 
of the second prime adds extra 
constraint on the first one. For example, there is no twin prime pair in the residue class $a=1$ (mod 6). For $q=10$, the 
residue class $a=3$ has only one prime pair, namely 3 and 5. These observations can be concisely presented in the following theorem.
\begin{theorem}
The reduced residue class $a$ (mod q) can not have more than one twin prime pair when $(a+2,q)>1$. For such $a$, 
if both $a$ and $a+2$ are prime numbers, then they represent the only twin prime pair in the residue class $a$.
\label{th3.1}
\end{theorem}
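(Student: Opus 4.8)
The plan is to exploit the observation that membership in a reduced residue class $a \pmod q$ fixes the residue of the second member $\hat{p}+2$ modulo every prime dividing $q$, and that the hypothesis $(a+2,q)>1$ forces one such residue to vanish. This immediately collapses the set of admissible second members to a single prime.

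First I would fix a prime $p$ with $p\mid q$ and $p\mid(a+2)$; such a $p$ exists precisely because $(a+2,q)>1$. (Incidentally $p$ is odd: if $p=2$ then $2\mid(a+2)$ makes $a$ even, contradicting $(a,q)=1$ together with $2\mid q$, though this is not needed for the argument.) Next, take an arbitrary twin prime pair $(\hat{p},\hat{p}+2)$ lying in the class, so $\hat{p}\equiv a\pmod q$. Reducing modulo $p$, which is legitimate since $p\mid q$, gives $\hat{p}\equiv a\pmod p$, whence $\hat{p}+2\equiv a+2\equiv 0\pmod p$. Thus $p\mid(\hat{p}+2)$. Because $\hat{p}+2$ is prime and $p>1$, this forces $\hat{p}+2=p$, so $\hat{p}=p-2$ is completely determined. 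Since $p$ was fixed in advance, no two distinct twin prime pairs can inhabit the class, which establishes the first assertion.

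For the second assertion I would add the hypothesis that $a$ and $a+2$ are both prime. Then $(a,a+2)$ is itself a twin prime pair, and since $0<a<q$ it satisfies $a\equiv a\pmod q$, so it does lie in the class. Because $a+2$ is now prime while $(a+2,q)>1$, the only prime divisor of $a+2$ that can divide $q$ is $a+2$ itself, so the prime $p$ chosen above must equal $a+2$. The unique candidate supplied by the first part is then $\hat{p}+2=p=a+2$, i.e. the pair $(a,a+2)$, confirming it is the sole twin prime pair in the class.

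I do not expect a genuine obstacle, since the content is elementary modular arithmetic; the only point demanding care is the logical bookkeeping. The first part proves uniqueness (at most one pair) without asserting existence, whereas the second part exhibits a concrete existing pair under the extra primality hypothesis and verifies that it coincides with the forced candidate. One should also note that $p$ need not be unique when $a+2$ is composite, but any single valid choice already forces $\hat{p}+2=p$, so uniqueness of the pair follows from one choice alone.
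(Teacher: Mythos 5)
Your proof is correct and follows essentially the same route as the paper: any twin prime pair in the class has its second member divisible by a fixed prime divisor of $\gcd(a+2,q)$, which forces that member to equal the prime itself and hence pins down at most one pair, the exception $(a,a+2)$ occurring exactly when $a$ and $a+2$ are both prime. Your write-up is more careful than the paper's brief sketch (fixing $p$ in advance and explicitly verifying that $(a,a+2)$ lies in the class), but the underlying idea is identical.
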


This theorem can simply be proved by noting that for any prime $p\equiv a$ (mod $q$), $p+2$ will be forced to be a composite
if $(a+2,q)\neq 1$. Here only exception may happen when the number $a+2$ is itself a prime number. In such a case, if $a$ is also a 
prime number, then the pair consisting of $a$ and $a+2$ is the only twin prime pair in the class $a$.

It may be mentioned in the passing that some non-reduced residue class may have one twin prime pair. For example, when $q=9$, residue class 
$a=3$ has one prime pair, namely 3 and 5. 

Theorem \ref{th3.1} helps us to come up with a conjecture for twin primes in the line of the Dirichlet's Theorem for prime numbers. 
For that we first define below an admissible class in the present context of twin prime distribution.
\begin{definition}
A residue class $a$ (mod $q$) is said to be admissible if $(a,q)=1$ and $(a+2,q)=1$.
\label{df3.1}
\end{definition}

With this definition of admissible class, we now propose the following conjecture. 
\begin{conjecture}
Each admissible residue class of a given modulus contains infinite number of twin prime pairs. 
\label{cj3.1}
\end{conjecture}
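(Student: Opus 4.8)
The plan is to approach this through a Hardy--Littlewood--type density heuristic built on the modified totient function $\phi_2$, while being honest that a genuine proof is out of reach: taking any admissible $a \pmod q$ already contains the ordinary twin prime conjecture as a special case, so this statement is \emph{at least as hard} as the infinitude of twin primes. With that caveat, the natural route is to estimate the counting function $\pi_2(x;q;a)$ by sieving and to argue that its main term grows without bound. First I would restrict attention to residues $m \equiv a \pmod q$ with $a$ admissible; by Definition~\ref{df3.1} neither $m$ nor $m+2$ is divisible by any prime dividing $q$, so Theorem~\ref{th3.1} guarantees that no local obstruction collapses the class to finitely many pairs.

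The core computation adapts the Legendre-type formula stated in the excerpt. I would sieve the progression $m \equiv a \pmod q$, $m \le x$, by the condition that both $m$ and $m+2$ are coprime to $P(\sqrt{x})$, writing the count by inclusion--exclusion over pairs $(a',b')$ with $a'b' \mid P(\sqrt{x})$ exactly as in the main-text identity, but now with the extra congruence folded in via the Chinese Remainder Theorem. Because $a$ is admissible, the congruence is compatible with the twin condition modulo each prime dividing $q$, and the effect is to distribute the surviving $m$ evenly among the $\phi_2(q)$ admissible classes; the expected main term thus becomes $\tfrac{1}{\phi_2(q)}$ times the global twin-prime density. Explicitly the local factor at each odd prime $p \nmid q$ is $1 - 2/p$, matching the product form $\phi_2(n) = n(1-\theta_n/2)\prod_{p>2,\,p\mid n}(1-2/p)$, while the primes dividing $q$ are absorbed into the weight $1/\phi_2(q)$. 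Collecting these yields a positive singular-series constant times $x/(\phi_2(q)(\log x)^2)$, and the one genuinely algebraic step is to verify this constant is strictly positive, which holds precisely when $a$ is admissible, a non-admissible class contributing a vanishing local factor and hence at most the finitely many pairs allowed by Theorem~\ref{th3.1}.

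The hard part --- and the reason this must remain a conjecture rather than a theorem --- is passing from the heuristic main term to a rigorous lower bound. This is the \emph{parity problem} of sieve theory: linear sieves detect almost-primes but cannot separate integers with an even number of prime factors from those with an odd number, so they deliver the expected upper bound $\pi_2(x;q;a) \ll x/(\phi_2(q)(\log x)^2)$ while yielding no nontrivial lower bound on genuine twin primes. I would therefore not expect the sieve alone to close the gap. A rigorous treatment would need either an unconditional input of the strength of the Hardy--Littlewood $k$-tuple conjecture restricted to the progression, or Bombieri--Vinogradov and Zhang-type equidistribution pushed far enough to force bounded gaps \emph{inside} a single residue class. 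Establishing positivity of the singular series (the easy step above) together with such an equidistribution input would suffice; producing the latter is the real obstacle and lies beyond the methods available here.
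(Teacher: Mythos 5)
The statement you were asked to prove is Conjecture~\ref{cj3.1} of the paper: the author offers no proof, and none could be expected, since---as you correctly observe at the outset---the existence of even one admissible class with infinitely many twin prime pairs already contains the twin prime conjecture. Your posture (a $\phi_2$-based density heuristic plus an honest account of why the parity problem blocks a rigorous lower bound) is therefore the appropriate one, and it tracks the paper's own reasoning closely. Your predicted main term, a positive singular-series constant times $x/\bigl(\phi_2(q)(\log x)^2\bigr)$, with local factors $1-2/p$ at odd primes $p \nmid q$ and the primes dividing $q$ absorbed into the weight $1/\phi_2(q)$, is exactly the quantitative refinement the author states as Conjecture~\ref{cj3.2}, namely $\pi_2(x)/\pi_2(x;q;a) \to \phi_2(q)$; and your sieve-plus-singular-series derivation parallels the heuristic the paper gives in Section~\ref{sec5}, where the Hardy--Littlewood constant is recovered from the identity $2C_2 = \lim_{n\to\infty} P_n\phi_2(P_n)/\phi^2(P_n)$ for primorials. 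One small overstatement to fix: Theorem~\ref{th3.1} does not ``guarantee that no local obstruction collapses the class to finitely many pairs'' for admissible $a$; it only shows that \emph{non-admissible} classes have at most one pair. Admissibility removes the one known obstruction but guarantees nothing further---which is precisely why the statement must remain a conjecture.
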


A stronger version of this conjecture can be formulated with the definition of a {\it modified totient function} $\phi_2(q)$. 
This function gives the number of {\it admissible} residue classes modulo $q$.
\begin{definition}
The function $\phi_2(q)$ denotes the number of admissible residue classes modulo $q$, i.e., $\phi_2(q)=\#\{a<q~|(a,q)=1 ~{\rm and}~ (a+2,q)=1\}$.
\label{df3.2}
\end{definition}

Clearly $\phi_2(q)\le \phi(q)$, where $\phi(q)$ is the Euler's totient function. While calculating $\phi_2(q)$, a reduce residue class $a$ is
excluded if there exists a prime divisor ($p$) of $q$ so that $p|a+2$. This observation helps
us to deduce that $\phi_2(2^n)=\phi(2^n)=2^{n-1}$ for $n\ge 1$, and for any odd prime $p$, $\phi_2(p^m)=\phi(p)-p^{m-1}=p^m(1-2/p)$ when $m\ge 1$. 
In Section \ref{sec7} we prove that 
\begin{equation}
\begin{split}
\phi_2(q) &= q \prod_{p|q}(1-\frac 2 p) \text{~~for odd q, and}\\ \nonumber 
\phi_2(q) &= q (1-\frac 1 2) \prod_{p>2,~p|q}(1-\frac 2 p) \text{~~for even q}.
\end{split}
\end{equation}
If we take $\phi_2(1) = 1$ then the function $\phi_2(q)$ is multiplicative. Above formula for $\phi_2(q)$ is stated as a theorem in Section \ref{sec4}. 

The function $\phi_2(q)$ can also be expressed in terms of the principal Dirichlet character ($\chi_1$) in the following way: 
$\phi_2(q)=\sum_{i=1}^q\chi_1(i)\chi_1(i+2)=\sum_{k=1}^{\phi(q)}\chi_1(a_k+2)$, where the second sum is over the reduced residues ($a_k$).
The values of the function for some integers are as follows: $\phi_2(4)= \phi(4)=2$, 
$\phi_2(6)= \phi(6)-1=1$, $\phi_2(7)= \phi(7)-1=5$ and $\phi_2(9)= \phi(9)-3=3$. 

Using the function $\phi_2(q)$, we now propose the following conjecture regarding the twin prime counting function for a given admissible class.
\begin{conjecture}
For a given $q$ and an admissible class $a$ (mod $q$), we have  
\begin{equation}
{\displaystyle \lim_{x\rightarrow\infty}} \frac{\pi_2(x)}{\pi_2(x;q;a)} = \phi_2(q). \nonumber
\end{equation}
\label{cj3.2}
\end{conjecture}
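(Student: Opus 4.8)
The plan is to derive the limit from a Hardy--Littlewood-type singular series for the two linear forms $n,n+2$ restricted to a fixed residue class, and to show that the leading-order constant depends on $a$ only through its admissibility; the twin primes then split evenly among the $\phi_2(q)$ admissible classes. Since the statement is a conjecture, what I can honestly offer is a heuristic derivation together with a clean reduction to the twin-prime problem: the hard part is not the identity $\phi_2(q)$ but the analytic control that would turn the heuristic into a theorem. First I would write $\pi_2(x;q;a)$ as a sieve over $n$ and $n+2$ with the extra congruence $n\equiv a\pmod q$, condition on residues modulo each prime $\ell$, and invoke the local--global heuristic so that the expected density factorizes as a product of local densities $\prod_\ell\delta_\ell$. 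The goal is to compare these factors against the ones occurring in the unconstrained count $\pi_2(x)$, so that almost all of them cancel in the ratio $\pi_2(x)/\pi_2(x;q;a)$.

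Next I would analyze the local factors prime by prime. For a prime $\ell\nmid q$ the congruence $n\equiv a\pmod q$ imposes no condition modulo $\ell$, so $\delta_\ell$ is identical in the constrained and unconstrained problems and cancels in the ratio. For a prime power $\ell^v\parallel q$ the situation differs: in the unconstrained problem the residues modulo $\ell^v$ that keep both $n$ and $n+2$ free of $\ell$ are exactly those avoiding $0$ and $-2$ modulo $\ell$, and there are precisely $\phi_2(\ell^v)$ of them (this is where the values $\phi_2(2^n)=2^{n-1}$ and $\phi_2(p^m)=p^m(1-2/p)$ from Section \ref{sec4} enter), whereas the constraint $n\equiv a\pmod{\ell^v}$ with $a$ \emph{admissible} pins down exactly one of these. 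Hence each bad prime contributes a factor $1/\phi_2(\ell^v)$ to $\pi_2(x;q;a)/\pi_2(x)$.

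Multiplying over all $\ell\mid q$ and using the multiplicativity of $\phi_2$, so that $\prod_{\ell^v\parallel q}\phi_2(\ell^v)=\phi_2(q)$, I would obtain $\pi_2(x;q;a)/\pi_2(x)\to 1/\phi_2(q)$, which is the asserted limit after inversion. The essential structural point is that this factor is the same for \emph{every} admissible $a$: the constraint at each bad prime selects one residue out of a pool whose size is independent of $a$, so the twin primes are predicted to equidistribute across the admissible classes rather than to favour any of them. An equivalent self-contained route would run the exact Legendre/$\mu_2$ sieve announced in the abstract with the added congruence $n\equiv a\pmod q$ and read off $\phi_2(q)$ directly from the main term.

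The main obstacle is exactly the one that keeps the twin-prime conjecture open: converting the local--global heuristic, equivalently controlling the error terms in the sieve, into a genuine asymptotic. The singular-series manipulation above is purely formal, and even $\pi_2(x)\to\infty$ is unknown, so the best the argument can do is reduce Conjecture \ref{cj3.2} to a Hardy--Littlewood-type equidistribution statement for the pair $n,n+2$ in arithmetic progressions. Establishing that uniform equidistribution, and not the bookkeeping with $\phi_2$, is where any real proof would have to do its work.
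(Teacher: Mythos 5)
The statement you were asked to prove is Conjecture \ref{cj3.2}, and the paper offers no proof of it: it is stated as an open conjecture, motivated only by the observation that $\phi_2(q)$ counts the admissible residue classes (Definitions \ref{df3.1} and \ref{df3.2}), in analogy with the role of $\phi(q)$ in Dirichlet's theorem. Your proposal is therefore correct in the only sense available here: you rightly decline to claim a proof, and instead supply the natural Hardy--Littlewood local--global heuristic together with an honest statement of where any genuine proof would have to do its work. Your local bookkeeping is sound: at primes $\ell\nmid q$ the local densities of the pair $n,n+2$ are identical in the constrained and unconstrained counts and cancel in the ratio; at a prime power $\ell^{v}\,\Vert\, q$ the admissible residues modulo $\ell^{v}$ number exactly $\phi_2(\ell^{v})$ and the congruence $n\equiv a\pmod{q}$ with $a$ admissible selects one of them, contributing $1/\phi_2(\ell^{v})$; multiplicativity of $\phi_2$ (Theorem \ref{th4.1}) then assembles these into $1/\phi_2(q)$, independent of which admissible $a$ was chosen. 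This is consistent with, and somewhat more detailed than, the paper's own heuristic reasoning in Section \ref{sec5}, which uses $\phi_2$ to recover the Hardy--Littlewood constant $2C_2$ but never treats the residue-class refinement.

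Two small points. First, the prime-power values $\phi_2(2^{n})=2^{n-1}$ and $\phi_2(p^{m})=p^{m}(1-2/p)$ that you invoke are derived in Section \ref{sec3} (and follow from Theorem \ref{th4.1}), not in Section \ref{sec4} as you cite; this is cosmetic. Second, your closing reduction is the right way to frame the status of the problem: the conjecture presupposes Conjecture \ref{cj3.1} (infinitude of twin pairs in each admissible class), and even $\pi_2(x)\rightarrow\infty$ is unknown, so no amount of care with the singular series or with the exact Legendre-type sieve of Theorem \ref{th6.1} can close the gap --- the obstruction is controlling the error terms in the sieve, exactly as you say. There is no paper proof to compare against, and your proposal should not be penalized for failing to produce what does not exist.
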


We end this section with a conjecture on the arithmetic progression of twin primes in line with the Green-Tao theorem \cite{greentao08}. 
For some positive integers $a$ and $b$, let the sequence $a+kb$ represents (first members of) twin primes for $k=0, 1, 2, \cdots, l-1$ ($l>2$). 
This sequence is said to represent an arithmetic progression of twin primes with $l$ terms. 
\begin{conjecture}
For every natural number $l$ ($l>2$), there exist arithmetic progressions of twin primes with $l$ terms.
\label{cj3.3}
\end{conjecture}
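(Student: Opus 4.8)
The plan is to model the argument on the Green--Tao theorem \cite{greentao08}, with the first members of twin prime pairs playing the role of the primes. The engine of that theorem is a transference principle: Szemer\'edi's theorem already guarantees long arithmetic progressions inside any subset of the integers of positive upper density, and the transference principle upgrades this to any set sitting with positive \emph{relative} density inside a sufficiently pseudorandom majorant. Concretely, I would attach to each $n\le N$ a nonnegative weight $f(n)$ concentrated on those $n$ for which both $n$ and $n+2$ are prime, together with a majorant $\nu(n)\ge f(n)$ built from a truncated Selberg/GPY-type sieve weight for the pair of linear forms $n$ and $n+2$. If one verifies that $\nu$ obeys the linear forms condition and the correlation condition of \cite{greentao08}, and that $f$ has positive mean, then the relative Szemer\'edi theorem yields arbitrarily long progressions on which $f$ is positive, i.e. progressions of twin primes, which is exactly Conjecture \ref{cj3.3}.

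Before any of this can run, the local obstructions must be removed by a $W$-trick adapted to the twin-prime setting. Setting $W=P(w)=\prod_{p\le w}p$, I would fix a residue $b \pmod W$ that is \emph{admissible} in the sense of Definition \ref{df3.1}, so that $(b,W)=1$ and $(b+2,W)=1$, and restrict attention to $n\equiv b \pmod W$. This is precisely the normalization for which the modified totient function of this paper is the natural counting device: the number of admissible classes is $\phi_2(W)$, so after the $W$-trick the correct local density factor is $W/\phi_2(W)$ in place of the $W/\phi(W)$ of the prime case. Choosing $b$ admissible is what prevents the sieve weight from collapsing, and is the twin-prime analogue of insisting $(b,W)=1$ in the Green--Tao construction.

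The decisive step, and the one I expect to be the genuine obstacle, is establishing that $f$ has positive mean after the $W$-trick; equivalently, that the number of twin primes up to $N$ in a fixed admissible class tends to infinity. This is exactly the content of Conjecture \ref{cj3.1}, and quantitatively of Conjecture \ref{cj3.2} together with the infinitude of twin primes. The transference machinery is purely a conversion device: it turns a positive-density hypothesis into an arithmetic-progression conclusion, but it cannot manufacture the density, so a lower bound $\pi_2(N)\to\infty$ of Hardy--Littlewood strength is unavoidable, and that bound is itself open. A secondary obstacle is the pseudorandomness of $\nu$: in the prime case this rests on Goldston--Pintz--Yildirim estimates, whereas here it would require simultaneous correlation control of the two forms $n$ and $n+2$ over many shifts, i.e. equidistribution of twin primes in arithmetic progressions of Bombieri--Vinogradov / Elliott--Halberstam type, which is presently out of reach.

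Consequently I would not expect an unconditional proof by this route. What the plan does deliver, and what I would actually write up, is a conditional theorem: granting the quantitative twin prime conjecture (Conjecture \ref{cj3.2} supplemented by an infinitude input) and the requisite linear-forms and correlation bounds for the pair $(n,n+2)$, the Green--Tao transference combined with Szemer\'edi's theorem produces arbitrarily long arithmetic progressions of twin primes, confirming Conjecture \ref{cj3.3} under those hypotheses.
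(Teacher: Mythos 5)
This statement is not proved in the paper at all: it is stated as a conjecture, supported only by numerical examples of twin-prime progressions of lengths $6$ and $7$ (e.g.\ $41+420k$ for $k=0,\dots,5$ and $2823809+570570k$ for $k=0,\dots,6$). So there is no proof of the paper's to compare yours against, and the relevant question is whether your proposal actually establishes the conjecture. It does not, and to your credit you say so explicitly: the Green--Tao transference machinery converts a positive \emph{relative} density hypothesis into a progression conclusion, but the density input here is precisely the open infinitude of twin primes --- in the paper's language, Conjecture \ref{cj3.1} (or quantitatively Conjecture \ref{cj3.2}), restricted to an admissible class in the sense of Definition \ref{df3.1}. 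Your identification of the two obstructions is accurate: (i) positivity of the mean of $f$ after the $W$-trick cannot be manufactured by transference and is equivalent to an open twin-prime lower bound, and (ii) the linear-forms and correlation conditions for a majorant adapted to the pair $(n,n+2)$ would require equidistribution inputs well beyond what is known. Your observation that the local normalization should use $W/\phi_2(W)$ rather than $W/\phi(W)$, with $b$ ranging over the $\phi_2(W)$ admissible residues, is also the correct twin-prime analogue and meshes with the role the paper assigns to $\phi_2$ (Theorem \ref{th4.1}).

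The net effect is that your write-up is a correct \emph{conditional} reduction, not a proof, and it leaves Conjecture \ref{cj3.3} exactly as open as the paper does. One point worth adding to situate the conditional result: the nearest unconditional theorem in this direction combines the Maynard--Tao sieve with the Green--Tao method to produce arbitrarily long arithmetic progressions of primes $p$ such that the interval $[p, p+C]$ contains another prime for some fixed constant $C$; what is out of reach is forcing $C=2$. Any referee would expect you to state your result in that conditional form and to flag that the hypotheses subsume the twin prime conjecture itself, since a reader could otherwise mistake the transference scaffolding for progress on the density input, which it is not.
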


Some examples for the arithmetic progressions of twin primes are as follows: 41+420$\cdot$k, for k = 0, 1, .., 5; 
51341+16590$\cdot$k, for k = 0, 1, .., 6; 2823809+570570$\cdot$k, 
for k = 0, 1, .., 6. Each number in a progression represents the first member of a twin prime.

\section{Functions $\mu_2(n)$ and $\phi_2(n)$}
\label{sec4}

We begin this section with the following definition of a {\it modified M\"{o}bius function} $\mu_2(n)$. 
\begin{definition}
If $\omega_o(n)$ denotes the number of odd prime divisors of $n$, then the {\it modified M\"{o}bius function} is defined as
\begin{equation}
\mu_2(n)=\mu(n)2^{\omega_o(n)},\nonumber 
\end{equation}
where $\mu(n)$ is the M\"{o}bius function.
\label{df4.1}
\end{definition}
The function $\mu_2(n)$ is multiplicative. The values of $\omega_o(n)$ for some $n$ are as follows: $\omega_o(1)=0$, $\omega_o(2)=0$, $\omega_o(3)=1$ 
and $\omega_o(6)=1$.

The main reason we defined $\mu_2(n)$ is that it is possible to express $\phi_2(n)$ in terms of this modified M\"{o}bius function. The following 
theorem (\ref{th4.1}) gives two different expressions of $\phi_2(n)$. The proof of the theorem is given in Section \ref{sec7}.

\begin{theorem}
The function $\phi_2(n)$, as defined in Definition \ref{df3.2}, takes the following product form 
\begin{equation}
\phi_2(n) = n (1-\frac{\theta_n}{2}) \prod_{p>2,~p|n}(1-\frac 2 p), 
\label{eq_ph2}
\end{equation}
where $\theta_n$ is either 0 or 1 depending on whether $n$ is odd or even respectively. Furthermore, using $\mu_2(n)$ function, 
one can express $\phi_2(n)$ as the following divisor sum:
\begin{equation}
\phi_2(n) = n \sum_{d|n} \frac{\mu_2(d)}{d}. 
\label{eq_p2m2}
\end{equation}
\label{th4.1}
\end{theorem}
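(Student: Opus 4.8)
The plan is to prove both formulas by the standard strategy for multiplicative arithmetic functions: first establish that $\phi_2$ is multiplicative, then evaluate it on prime powers, and finally assemble the general formula from its values at prime powers. For the divisor-sum identity I would observe that the right-hand side is itself a Dirichlet convolution of two multiplicative functions, so it too is multiplicative, and it then suffices to compare both sides at prime powers.

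First I would prove multiplicativity, i.e.\ $\phi_2(mn) = \phi_2(m)\phi_2(n)$ whenever $(m,n)=1$, using the Chinese Remainder Theorem. By CRT a residue $a$ modulo $mn$ corresponds bijectively to a pair $(a_1, a_2)$ with $a \equiv a_1 \pmod m$ and $a \equiv a_2 \pmod n$. The admissibility condition decouples cleanly: $(a, mn)=1$ is equivalent to $(a_1,m)=1$ and $(a_2,n)=1$, while $(a+2, mn)=1$ is equivalent to $(a_1+2, m)=1$ and $(a_2+2, n)=1$, since $a + 2 \equiv a_i + 2$ modulo each factor. Hence the admissible residues modulo $mn$ are in bijection with pairs of admissible residues modulo $m$ and modulo $n$, which gives the product.

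Next I would compute $\phi_2$ on prime powers. For an odd prime $p$, the joint condition $(a, p^m)=1$ and $(a+2,p^m)=1$ amounts to $a \not\equiv 0$ and $a \not\equiv -2 \pmod p$; since $p$ is odd these are two \emph{distinct} forbidden classes among the $p$ residues modulo $p$, each lifting to $p^{m-1}$ residues modulo $p^m$, so $\phi_2(p^m) = (p-2)p^{m-1} = p^m(1 - 2/p)$. The case $p=2$ is where the only real care is needed and where the factor $(1 - \theta_n/2)$ originates: modulo $2$ the two forbidden residues $0$ and $-2$ \emph{coincide}, so only one class is removed and the constraint $(a+2, 2^m)=1$ is automatic once $(a, 2^m)=1$; thus $\phi_2(2^m) = \phi(2^m) = 2^{m-1}$ rather than the vacuous $(1 - 2/2) = 0$ one would get by blindly reusing the odd-prime formula. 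Assembling these through multiplicativity over the factorization of $n$ yields the product form, with the single factor of $1/2$ from the $2$-part accounting for the even case $\theta_n = 1$.

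Finally, for the divisor sum I would write $g(n) := n \sum_{d|n} \mu_2(d)/d = (\mu_2 * \mathrm{Id})(n)$, a convolution of the multiplicative functions $\mu_2$ and $\mathrm{Id}(n)=n$, hence multiplicative, and verify $g = \phi_2$ on prime powers. Because $\mu_2(d)=\mu(d)2^{\omega_o(d)}$ vanishes on non-squarefree $d$, only $d=1$ and $d=p$ contribute: for odd $p$ one gets $g(p^m) = p^m + \mu_2(p)p^{m-1} = p^m - 2p^{m-1}$, and for $p=2$ one gets $g(2^m) = 2^m + \mu_2(2)2^{m-1} = 2^m - 2^{m-1}$, using $\mu_2(2) = \mu(2)2^{\omega_o(2)} = -1$ since $2$ has no odd prime divisor. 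These match the prime-power values of $\phi_2$ computed above, so $g = \phi_2$ everywhere. The main obstacle is not computational but bookkeeping: keeping the prime $2$ on a separate footing throughout, since it is precisely the collapse $0 \equiv -2 \pmod 2$ (and the resulting $\omega_o$ versus $\omega$ distinction feeding into $\mu_2$) that breaks the symmetry between $2$ and the odd primes and produces the asymmetric formula.
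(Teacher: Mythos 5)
Your proof is correct, and it splits naturally into a half that coincides with the paper and a half that does not. The paper proves Theorem \ref{th4.1} twice: its second method is exactly your first step, using the Chinese Remainder Theorem to reduce to prime powers (the paper phrases it as counting solutions of the system $x\equiv b_i \pmod{p_i^{a_i}}$ rather than as an explicit multiplicativity statement, but it is the same bijection), together with the prime-power values $\phi_2(p^m)=p^m(1-\frac 2p)$ for odd $p$ and $\phi_2(2^m)=2^{m-1}$. Where you genuinely diverge is the divisor-sum identity \eqref{eq_p2m2}: the paper obtains it by the principle of cross-classification (Theorem \ref{th8.2}), sieving $S=\{1,\dots,n\}$ by the sets $S_k=\{m\le n: p_k\mid m(m+2)\}$ and evaluating each intersection $N(S_{k_1}\cdots S_{k_l})$ by splitting it over the $2^l$ factorizations $dd'=P_l$ and solving the congruences $da+2\equiv 0 \pmod{d'}$; the coefficient $\mu_2(d)=\mu(d)2^{\omega_o(d)}$ \emph{emerges} from that count rather than being verified against a prior definition. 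You instead note that $\mu_2\ast\mathrm{Id}$ is multiplicative and compare prime-power values, which is shorter and essentially bookkeeping-free, at the cost of treating the factor $2^{\omega_o}$ as given rather than explaining its combinatorial origin. One thing the paper's counting route buys that yours does not: the identical cross-classification argument, with $\{1,\dots,n\}$ replaced by the integers up to $x$, is reused almost verbatim to prove the Legendre-type formula of Theorem \ref{th6.1}, a setting where your multiplicativity argument is unavailable because the counts there involve floor functions of $x$ and are not multiplicative in the modulus.
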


In the following we discuss some properties of the functions $\mu_2(n)$ and $\phi_2(n)$.

\begin{lemma}
The divisor sum of the function $\mu_2$ is given by 
\begin{equation}
\sum_{d|n}\mu_2(d)= \left\{\begin{array}{ll}
0 & if~~ n~is~ even \\ \nonumber
(-1)^{\omega(n)} & if~~ n~is~ odd.
\end{array}\right. 
\end{equation}
\label{lm4.1}
\end{lemma}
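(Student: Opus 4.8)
The plan is to prove Lemma \ref{lm4.1} by exploiting the multiplicativity of $\mu_2$, which reduces the evaluation of the divisor sum $f(n) := \sum_{d|n}\mu_2(d)$ to a computation on prime powers. First I would observe that since $\mu_2$ is multiplicative (as noted right after Definition \ref{df4.1}), the divisor sum $f(n)$ is itself multiplicative. Therefore it suffices to evaluate $f(p^m)$ for each prime power appearing in the factorization of $n$, and then multiply these local factors together.

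The key computation is on prime powers. Because $\mu(p^m) = 0$ for $m \ge 2$, only the divisors $1$ and $p$ contribute to $f(p^m)$, so $f(p^m) = \mu_2(1) + \mu_2(p) = 1 + \mu(p)2^{\omega_o(p)}$. Here I would split into two cases according to the parity of the prime. For an odd prime $p$, we have $\omega_o(p) = 1$ and $\mu(p) = -1$, giving $\mu_2(p) = -2$ and hence $f(p^m) = 1 - 2 = -1$. For the prime $p = 2$, we have $\omega_o(2) = 0$ (as explicitly listed in the values after Definition \ref{df4.1}), so $\mu_2(2) = \mu(2)\cdot 2^0 = -1$, giving $f(2^m) = 1 - 1 = 0$.

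The conclusion then follows by assembling the local factors. If $n$ is even, then $2^m \| n$ for some $m \ge 1$, and the corresponding factor $f(2^m) = 0$ forces the entire product $f(n)$ to vanish, which gives the first case of the lemma. If $n$ is odd, then every prime divisor is odd and each contributes a factor of $-1$; since there are $\omega(n)$ distinct prime divisors (and for odd $n$ this equals $\omega_o(n)$), the product is $(-1)^{\omega(n)}$, which is exactly the second case.

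I do not anticipate a genuine obstacle here, as the argument is the standard multiplicative-function computation; the only point requiring care is the handling of the prime $2$, where one must use $\omega_o(2) = 0$ rather than $\omega(2) = 1$ so that $\mu_2(2) = -1$ and the even-$n$ factor collapses to zero. An alternative to invoking multiplicativity directly would be to write $f(n) = \prod_{p|n}\bigl(1 + \mu_2(p)\bigr)$ by expanding the squarefree divisor sum (terms with repeated prime factors vanish since $\mu_2$ inherits the squarefree support of $\mu$), which makes the parity split fully transparent; either route reaches the stated result with essentially the same case analysis.
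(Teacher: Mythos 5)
Your proof is correct, but it is organized differently from the paper's. The paper treats the two parities by two separate arguments: for even $n$ it uses a pairing (involution) argument, matching each odd divisor $d$ with the even divisor $2d$ so that $\mu_2(d)+\mu_2(2d)=0$ while divisors divisible by $4$ contribute nothing, hence the sum vanishes; for odd $n$ it invokes Lemma \ref{lm8.1} with $f(d)=2^{\omega_o(d)}$ to get $\sum_{d|n}\mu(d)2^{\omega_o(d)}=\prod_{p|n}(1-2)=(-1)^{\omega(n)}$. You instead run a single unified argument: the divisor sum $f(n)=\sum_{d|n}\mu_2(d)$ is multiplicative because $\mu_2$ is, so everything reduces to the local factors $f(p^m)$, which equal $-1$ at odd primes and $0$ at $p=2$ (precisely because $\omega_o(2)=0$ gives $\mu_2(2)=-1$), and the even case then falls out as a vanishing local factor rather than requiring a separate pairing. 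Your odd-prime computation is essentially the same calculation hiding inside the paper's use of Lemma \ref{lm8.1}, so the two proofs agree there; what your route buys is uniformity (one mechanism covers both parities, and $n=1$ is the empty product), while the paper's pairing argument for even $n$ is more elementary in that it never needs the fact that a divisor sum of a multiplicative function is multiplicative. Your closing alternative, $f(n)=\prod_{p|n}\bigl(1+\mu_2(p)\bigr)$, is also valid and is in effect the second identity of Lemma \ref{lm8.1} applied to $\mu_2$'s squarefree support; it makes the case split equally transparent.
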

\begin{proof}
The formula is true for $n=1$ since $\mu_2(1)=\mu(1)2^{0} = 1$. Assume now that $n>1$ and is written $n=p_1^{a_1}p_2^{a_2}\cdots p_k^{a_k}$. First we take 
$n$ to be even with $p_1 = 2$ and $a_1\ge1$. We note that for any odd divisor $d$, there is an even divisor $2d$ so that $\mu_2(d)+\mu_2(2d) = 0$. It may 
be noted that we have $\mu_2(2^a d)=0$ for $a>1$. This shows that $\sum_{d|n}\mu_2(d) = 0$ when $n$ is even.

We now take $n$ to be odd. Using Lemma \ref{lm8.1}, we get the following.
\begin{equation}
\sum_{d|n}\mu_2(d) = \sum_{d|n} \mu(d) 2^{\omega_o(d)} = \prod_{p|d}(1-2)=(-1)^{\omega(n)}. \nonumber
\end{equation}
\end{proof}

\begin{lemma}
If $n=\prod_{p|n}p^{a_p}$, then the divisor sum of the function $\phi_2$ is given by
\begin{equation}
\sum_{d|n}\phi_2(d) = n \prod_{p>2,~p|n}\frac{\phi_2(p)+p^{-a_p}}{\phi(p)}.  \nonumber
\end{equation}
\label{lm4.2}
\end{lemma}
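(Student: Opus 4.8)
The plan is to leverage the multiplicativity of $\phi_2$ (established right after Definition \ref{df4.1} and Theorem \ref{th4.1}). Since $\phi_2$ is multiplicative, the divisor-sum function $\Phi(n) := \sum_{d|n}\phi_2(d)$ is automatically multiplicative as well. Hence it suffices to evaluate $\Phi$ on each prime power $p^{a_p}$ appearing in $n = \prod_{p|n}p^{a_p}$ and then multiply the local factors together; the only real work is a pair of geometric-series evaluations.

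First I would handle the prime $p=2$. Using $\phi_2(2^j)=2^{j-1}$ for $j\ge 1$ together with $\phi_2(1)=1$, the local sum collapses: $\Phi(2^{a_2}) = 1 + \sum_{j=1}^{a_2}2^{j-1} = 1 + (2^{a_2}-1) = 2^{a_2}$. Next, for an odd prime $p$ with exponent $a:=a_p$, I would insert $\phi_2(p^j)=p^{j-1}(p-2)$ and sum the geometric series, obtaining $\Phi(p^a) = 1 + (p-2)\sum_{j=1}^{a}p^{j-1} = 1 + (p-2)\frac{p^a-1}{p-1}$. A short simplification then gives
\[
\Phi(p^a) \;=\; \frac{(p-2)p^a + 1}{p-1}.
\]
The key move is to repackage this as $\Phi(p^a) = p^a\cdot\frac{(p-2)+p^{-a}}{p-1} = p^a\cdot\frac{\phi_2(p)+p^{-a}}{\phi(p)}$, using $\phi_2(p)=p-2$ and $\phi(p)=p-1$.

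Finally, multiplying the local factors yields
\[
\Phi(n) = \Phi(2^{a_2})\prod_{p>2,~p|n}\Phi(p^{a_p}) = 2^{a_2}\prod_{p>2,~p|n} p^{a_p}\,\frac{\phi_2(p)+p^{-a_p}}{\phi(p)},
\]
and since the accumulated prime-power prefactors satisfy $2^{a_2}\prod_{p>2,~p|n}p^{a_p} = \prod_{p|n}p^{a_p} = n$, the claimed identity drops out. (If $n$ is odd there is simply no factor at $2$ and $2^{a_2}$ is read as $1$.)

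The main obstacle, and essentially the only nontrivial step, is the algebraic repackaging of the odd-prime geometric sum into the precise form $\frac{\phi_2(p)+p^{-a_p}}{\phi(p)}$; the appearance of the $p^{-a_p}$ correction term is the feature one must track carefully, since it is exactly the residue left over when one factors $p^{a_p}$ out of $\frac{(p-2)p^{a_p}+1}{p-1}$. Everything else is routine multiplicativity bookkeeping.
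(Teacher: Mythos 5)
Your proof is correct and follows essentially the same route as the paper's: both exploit the multiplicativity of $\phi_2$ to factor the divisor sum into local prime-power sums, evaluate the odd-prime geometric series to get $\frac{(p-2)p^{a_p}+1}{p-1}$, repackage it as $p^{a_p}\,\frac{\phi_2(p)+p^{-a_p}}{\phi(p)}$, and absorb the factor $2^{a_2}$ at the prime $2$ into $n$.
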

\begin{proof}
For odd $n$, taking advantage of the multiplicative property of $\phi_2$, we write 
$\sum_{d|n}\phi_2(d) = \prod_{p|n}\{1+\phi_2(p)+\phi_2(p^2)+\cdots+\phi_2(p^{a_p})\}$. Now since $\phi_2(p^m)=p^m(1-\frac 2p)$ for odd $p$ and $m\ge1$, we 
get $\sum_{d|n}\phi_2(d) = \prod_{p|n}\frac{p^{a_p}(p-2)+1}{p-1} = n \prod_{p|n}\frac{\phi_2(p)+p^{-a_p}}{\phi(p)}$. For even $n$, there will be an 
additional multiplicative factor to this result; the factor is $(1+\phi_2(2)+\phi_2(2^2)+\cdots+\phi_2(2^{a_2})) = 2^{a_2}$. This factor is a 
part of $n$.
\end{proof}

\begin{theorem}
Let $D(\mu_2,s)$ denote the Dirichlet series for $\mu_2$, i.e., $D(\mu_2,s):=\sum_{n=1}^{\infty}\frac{\mu_2(n)}{n^s}$. We have
\begin{equation}
D(\mu_2,s) = (1-\frac{1}{2^s})\prod_{p>2}(1-\frac{2}{p^s}) {~~ for ~~} Re(s) >1.\nonumber
\end{equation}
\label{th4.2}
\end{theorem}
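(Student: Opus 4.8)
The plan is to exploit the multiplicativity of $\mu_2$ (already noted just after Definition \ref{df4.1}) and expand the Dirichlet series as an Euler product, reading off the local factors at $p=2$ and at the odd primes separately. First I would verify absolute convergence in the half-plane $\mathrm{Re}(s)>1$, since it is this convergence that licenses the factorization of the series into a product over primes. Because $\mu(n)=0$ whenever $n$ is not squarefree, the same vanishing holds for $\mu_2$, and on squarefree $n$ one has $\abs{\mu_2(n)}=2^{\omega_o(n)}\le 2^{\omega(n)}$. Hence $\sum_n \abs{\mu_2(n)}/n^{\sigma}\le \sum_n 2^{\omega(n)}/n^{\sigma}$ with $\sigma=\mathrm{Re}(s)$, and the comparison series converges for $\sigma>1$ (indeed it equals $\zeta(s)^2/\zeta(2s)$, by the standard Euler-product computation for $2^{\omega(n)}$). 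This secures absolute convergence and justifies the rearrangement into an infinite product.

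With convergence in hand, I would write
\begin{equation}
D(\mu_2,s)=\prod_p\left(\sum_{k=0}^{\infty}\frac{\mu_2(p^k)}{p^{ks}}\right). \nonumber
\end{equation}
The decisive simplification is that $\mu_2(p^k)=\mu(p^k)2^{\omega_o(p^k)}=0$ for every $k\ge 2$, since $\mu$ vanishes on non-squarefree arguments. Each local factor therefore collapses to $1+\mu_2(p)/p^s$, so the entire computation reduces to evaluating $\mu_2$ at a single prime power. For $p=2$ we have $\omega_o(2)=0$, whence $\mu_2(2)=\mu(2)\cdot 2^0=-1$ and the factor is $1-2^{-s}$; for every odd prime $p$ we have $\omega_o(p)=1$, whence $\mu_2(p)=\mu(p)\cdot 2^1=-2$ and the factor is $1-2/p^s$.

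Separating the distinguished factor at $2$ from the product over the odd primes then yields
\begin{equation}
D(\mu_2,s)=\left(1-\frac{1}{2^s}\right)\prod_{p>2}\left(1-\frac{2}{p^s}\right), \nonumber
\end{equation}
which is the asserted identity. I do not anticipate a genuine obstacle: once multiplicativity is granted this is a routine Euler-product evaluation, and the only point requiring real care is the convergence step that legitimizes the product expansion. If one prefers to avoid invoking the closed form $\zeta(s)^2/\zeta(2s)$, it is enough to recall that $2^{\omega(n)}=O_\epsilon(n^\epsilon)$, so that the comparison series converges for $\sigma>1$ by comparison with $\sum n^{-(1+\delta)}$; either route closes the argument.
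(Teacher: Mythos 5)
Your proof is correct and takes essentially the same route as the paper: absolute convergence for $\mathrm{Re}(s)>1$ by a comparison bound on $\abs{\mu_2(n)}$ (you stop at $2^{\omega(n)}$, the paper bounds once more by $d(n)$ and cites the divisor series), followed by the Euler-product expansion for a multiplicative function (the paper's Lemma \ref{lm8.2}), with each local factor collapsing to $1+\mu_2(p)/p^s$ and the values $\mu_2(2)=-1$, $\mu_2(p)=-2$ for $p>2$ giving the stated product. The differences are purely cosmetic, so there is nothing further to reconcile.
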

\begin{proof}
First we note that $2^{\omega_o(n)}\le 2^{\omega(n)}\le d(n)$, where $d(n)$ counts the number of divisors of $n$. It is known that 
$\sum_{n=1}^{\infty}\frac{d(n)}{n^s}$ is an absolutely convergent series for $Re(s)>1$ (chapter 11, \cite{apostol76}); this implies that the series
$\sum_{n=1}^{\infty}\frac{\mu_2(n)}{n^s}$ also converges absolutely for $Re(s)>1$.

Using Lemma \ref{lm8.2}, we have $D(\mu_2,s)=\prod_p\{1+\frac{\mu_2(p)}{p^s}\} = (1-\frac{1}{2^s})\prod_{p>2}(1-\frac{2}{p^s})$.
\end{proof}

\begin{theorem}
Let $D(\phi_2,s)$ denote the Dirichlet series for $\phi_2$, i.e., $D(\phi_2,s):=\sum_{n=1}^{\infty}\frac{\phi_2(n)}{n^s}$. We have
\begin{equation}
D(\phi_2,s)= \zeta(s-1) D(\mu_2,s) \text{~~ for ~~} Re(s) >2,\nonumber
\end{equation}
where $\zeta(s-1)$ is the Riemann zeta function.
\label{th4.3}
\end{theorem}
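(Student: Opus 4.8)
The plan is to read the divisor-sum formula (\ref{eq_p2m2}) of Theorem \ref{th4.1} as a statement that $\phi_2$ is a Dirichlet convolution, and then invoke the elementary fact that a Dirichlet series sends convolution to ordinary multiplication. First I would rewrite (\ref{eq_p2m2}): pulling the factor $n$ inside the sum gives $\phi_2(n) = \sum_{d|n}\mu_2(d)\,(n/d)$, and substituting $e=n/d$ (so that $e$ too runs over the divisors of $n$) turns this into $\phi_2(n)=\sum_{de=n}\mu_2(d)\,\mathrm{Id}(e)$, where $\mathrm{Id}$ denotes the arithmetic function $\mathrm{Id}(e)=e$. Hence $\phi_2=\mu_2 * \mathrm{Id}$, the Dirichlet convolution of $\mu_2$ with $\mathrm{Id}$.

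Next I would record the two ingredients needed to pass to Dirichlet series. By Theorem \ref{th4.2}, the series $D(\mu_2,s)$ converges absolutely for $Re(s)>1$. For $\mathrm{Id}$, we have $D(\mathrm{Id},s)=\sum_{n=1}^{\infty} n\cdot n^{-s}=\sum_{n=1}^{\infty} n^{-(s-1)}=\zeta(s-1)$, which converges absolutely precisely when $Re(s-1)>1$, i.e.\ for $Re(s)>2$. Thus both factors converge absolutely on the common half-plane $Re(s)>2$.

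Finally, I would apply the standard theorem (chapter 11, \cite{apostol76}) that if $f$ and $g$ have absolutely convergent Dirichlet series on a half-plane, then $D(f*g,s)=D(f,s)\,D(g,s)$ there. Applied to $f=\mu_2$ and $g=\mathrm{Id}$ on $Re(s)>2$, this yields $D(\phi_2,s)=D(\mu_2,s)\,\zeta(s-1)$, which is the asserted identity.

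There is no serious obstacle here; the only point requiring care is the bookkeeping of the regions of absolute convergence, so that the convolution-to-product identity is legitimately applied on a genuine common domain. The binding constraint is the factor $\zeta(s-1)$, whose abscissa of absolute convergence is $2$, which is exactly why the identity is stated for $Re(s)>2$ rather than for $Re(s)>1$.
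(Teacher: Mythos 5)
Your proof is correct and follows essentially the same route as the paper: both identify $\phi_2 = \mu_2 * \mathrm{Id}$ from Equation (\ref{eq_p2m2}) and apply the standard Dirichlet-series multiplication theorem (Theorem \ref{th8.1}) with $f=\mu_2$, $g(n)=n$. Your version is in fact slightly more careful, since you verify absolute convergence of both factor series on $Re(s)>2$ (the actual hypothesis of the multiplication theorem), whereas the paper checks absolute convergence of $D(\phi_2,s)$ itself via $\phi_2(n)\le n$.
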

\begin{proof}
We first note that $\phi_2(n)\le n$. So the series $\sum_{n=1}^{\infty}\frac{\phi_2(n)}{n^s}$ is absolutely convergent for $Re(s)>2$.
The theorem now can be simply proved by taking $f(n)=\mu_2(n)$ and $g(n)=n$ in the Theorem \ref{th8.1}.
\end{proof}

\noindent {\bf Remark.} We here note that $\frac{1}{D(\mu_2,s)}= (1-\frac{1}{2^s})^{-1}\prod_{p>2}(1-\frac{2}{p^s})^{-1} = 
(1+\frac{1}{2^s}+\frac{1}{2^{2s}}+\cdots)\prod_{p>2}(1+\frac{2}{p^s}+\frac{2^2}{p^{2s}}+\cdots) = \sum_{n=1}^{\infty}\frac{2^{\Omega_o(n)}}{n^s}$,
where $\Omega_o(n)$ counts the number of odd prime divisors of $n$ with multiplicity. Now in analogy with the relation 
$\left(\sum_{n=1}^{\infty}\frac{\mu(n)}{n^s}\right)^{-1} = \zeta(s)$, one may define $\zeta_2(s):=\sum_{n=1}^{\infty}\frac{2^{\Omega_o(n)}}{n^s}$ so 
that Theorems \ref{th4.2} and \ref{th4.3} can be written respectively as $D(\mu_2,s) = 1/\zeta_2(s)$ and $D(\phi_2,s)=\zeta(s-1)/\zeta_2(s)$.

\begin{theorem}
We have the following weighted average of $\mu_2(n)$:
\begin{equation}
\sum_{n\le x} \mu_2(n)\left[\frac{x}{n}\right] = L(x) + L(\frac x2) + 2L(\frac x4) + 4L(\frac x8) + \cdots, \nonumber
\end{equation}
where $L(x) = \sum_{n\le x} (-1)^{\omega(n)}$. It is understood that $L(x)=0$ for $0 < x <1$.
\label{th4.4}
\end{theorem}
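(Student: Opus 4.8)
The plan is to convert the weighted sum into a divisor sum and then invoke Lemma \ref{lm4.1}. First I would use the elementary interchange of summation: since $\left[\frac{x}{n}\right]=\#\{k\ge 1 : nk\le x\}$, setting $m=nk$ gives
\begin{equation}
\sum_{n\le x}\mu_2(n)\left[\frac{x}{n}\right]=\sum_{nk\le x}\mu_2(n)=\sum_{m\le x}\sum_{d\mid m}\mu_2(d).\nonumber
\end{equation}
By Lemma \ref{lm4.1} the inner divisor sum vanishes whenever $m$ is even and equals $(-1)^{\omega(m)}$ whenever $m$ is odd. Hence the whole expression collapses to the odd-restricted analogue of $L$, which I will write as $L_o(x):=\sum_{m\le x,\ m\ \mathrm{odd}}(-1)^{\omega(m)}$. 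So the problem reduces to showing that $L_o(x)$ equals the stated alternating-coefficient combination of the full sums $L(x/2^k)$.

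Next I would relate $L_o$ back to $L$ through the $2$-adic structure of the integers. Writing each $n$ uniquely as $n=2^a m$ with $m$ odd and $a\ge 0$, and using that $\omega(2^a m)=\omega(m)$ for $a=0$ while $\omega(2^a m)=\omega(m)+1$ for $a\ge 1$, I split the sum defining $L(x)$ according to the exponent $a$ to obtain the key structural identity
\begin{equation}
L(x)=L_o(x)-\sum_{a\ge 1}L_o\!\left(\frac{x}{2^a}\right).\nonumber
\end{equation}
All sums here are finite, since $L(y)=0$ for $y<1$ forces $L_o(x/2^a)=0$ once $2^a>x$; this removes any convergence concern.

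It then remains to invert this identity to express $L_o$ in terms of $L$, and this coefficient bookkeeping is the only delicate part. Introducing the shift $S$ with $(Sf)(x)=f(x/2)$, the structural identity reads $L=\bigl(I-\sum_{a\ge 1}S^a\bigr)L_o=\frac{I-2S}{I-S}\,L_o$, so at least formally $L_o=\frac{I-S}{I-2S}\,L=(I+S+2S^2+4S^3+\cdots)L$, which is precisely the claimed right-hand side. To make this rigorous without relying on the operator heuristic, I would instead verify the target formula directly: substitute the structural identity for each $L(x/2^k)$ into $L(x)+\sum_{k\ge 1}2^{k-1}L(x/2^k)$ and collect the coefficient of each $L_o(x/2^N)$. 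That coefficient equals $c_N-\sum_{k=0}^{N-1}c_k$ with $c_0=1$ and $c_k=2^{k-1}$ for $k\ge 1$; since $\sum_{k=0}^{N-1}c_k=2^{N-1}=c_N$ for $N\ge 1$, every term cancels except the $N=0$ term $c_0\,L_o(x)=L_o(x)$. An induction on $N$ settles this cancellation cleanly, and combined with the first paragraph it yields the theorem.
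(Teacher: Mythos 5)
Your proposal is correct, and its first half coincides with the paper's: the paper also converts the weighted sum into $\sum_{n\le x}\sum_{d\mid n}\mu_2(d)$ (via its Theorem \ref{th8.3} rather than your direct $m=nk$ interchange, a cosmetic difference) and then applies Lemma \ref{lm4.1} to reduce everything to the odd-restricted sum $L_o(x)=\sum_{\mathrm{odd}\ n\le x}(-1)^{\omega(n)}$. Where you genuinely diverge is in relating $L_o$ to $L$. The paper peels off one factor of $2$ at a time: it derives the recursion $K(x)=L(x)+K(x/2)+K(x/4)+\cdots$ for $K(x)=\sum_{n\le x}\mu_2(n)[x/n]$ by repeatedly splitting the residual term $\sum_{n\le x/2^j}(-1)^{\omega(2^j n)}$ into odd and even parts, and then unrolls that self-referential recursion to obtain the stated coefficients $1,1,2,4,\dots$ --- both of these "repeated manipulation" steps are asserted rather than carried out in detail. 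You instead write down the complete $2$-adic decomposition $L(x)=L_o(x)-\sum_{a\ge 1}L_o(x/2^a)$ in one shot and then invert it, justifying the operator heuristic $(I-S)(I-2S)^{-1}=I+S+2S^2+4S^3+\cdots$ by an explicit, finite coefficient cancellation ($c_N-\sum_{k=0}^{N-1}c_k=0$ for $N\ge 1$). Your route buys a cleaner and more rigorous finish: the two places where the paper says "manipulating repeatedly" and "can be further manipulated" are exactly the places your structural identity plus the telescoping check $1+1+2+\cdots+2^{N-2}=2^{N-1}$ make airtight, and your observation that $L(y)=0$ for $y<1$ keeps all sums finite, so no formal-series convergence issues arise.
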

\begin{proof}
Using Theorem \ref{th8.3} with $f(n) = \mu_2(n)$ and $g(n) = 1$, we get
\begin{align*}
\sum_{n\le x} \mu_2(n)\left[\frac{x}{n}\right] = & \sum_{n\le x} \sum_{d|n} \mu_2(d) & \\ 
 = & \sum_{odd~ n \le x} (-1)^{\omega(n)} & \text{(using Lemma \ref{lm4.1})}\\
 = & L(x) - \sum_{n \le x/2} (-1)^{\omega(2n)} & \text{(where $L(x) = \sum_{n \le x} (-1)^{\omega(n)}$)}\\
\end{align*}
Since $\sum_{n \le x/2} (-1)^{\omega(2n)} = \sum_{odd~ n \le x/2} (-1)^{\omega(2n)}+\sum_{n \le x/4} (-1)^{\omega(4n)}$ and $\omega(2n) = \omega(n) + 1$ for odd $n$,
we get $K(x) = L(x) + K(\frac x2) - \sum_{n \le x/4} (-1)^{\omega(4n)}$, where $K(x)=\sum_{n\le x} \mu_2(n)\left[\frac{x}{n}\right]$. Now manipulating the last 
term repeatedly we get the following series: $K(x) = L(x) + K(\frac x2) + K(\frac x4) + K(\frac x8) + \cdots$. This series can be further manipulated to find $K(x)$
in the following way: $K(x) = L(x) + L(\frac x2) + 2L(\frac x4) + 4L(\frac x8) + \cdots$.
\end{proof}

\section{Function $\phi_2(n)$ and twin primes}
\label{sec5}

We noted in Conjecture \ref{cj3.2} how $\phi_2(n)$ can be helpful in studying the twin prime distributions. In this section we will present some more
observations on how $\phi_2(n)$ can be useful in the study of twin primes. First we mention a simple application of $\phi_2(n)$ in the form of the following 
theorem.
\begin{theorem}
For a given $l \ge 1$, consider a set $\mathcal{P}$ of any $l$ distinct primes. There always exists a number $m>1$ so that $(p,m(m+2))=1$ for every prime 
$p \in \mathcal{P}$.
\label{th5.1}
\end{theorem}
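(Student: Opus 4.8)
The plan is to reduce the statement to the positivity of the modified totient $\phi_2$ at a single, well-chosen modulus, and then to read off the conclusion from the product formula of Theorem~\ref{th4.1}.

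First I would set $q := \prod_{p \in \mathcal{P}} p$, the product of the $l$ distinct primes in $\mathcal{P}$, so that $q$ is squarefree and its prime divisors are exactly the elements of $\mathcal{P}$. The crucial observation is that for any integer $m$, the requirement $(p, m(m+2)) = 1$ for every $p \in \mathcal{P}$ is equivalent to $p \nmid m$ and $p \nmid (m+2)$ for every $p \mid q$, a condition that by the Chinese Remainder Theorem depends only on the residue of $m$ modulo $q$. Hence a suitable $m$ exists precisely when there is an admissible residue class modulo $q$ in the sense of Definition~\ref{df3.1}, that is, precisely when $\phi_2(q) \ge 1$.

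Next I would establish $\phi_2(q) > 0$ directly from the product form \eqref{eq_ph2}. Since $q$ is squarefree, the product ranges over the odd primes $p \mid q$, and as every such $p$ satisfies $p \ge 3$ we have $1 - \tfrac{2}{p} \ge \tfrac{1}{3} > 0$; the prefactor $1 - \tfrac{\theta_q}{2}$ equals $1$ or $\tfrac{1}{2}$ and is likewise strictly positive. Therefore $\phi_2(q) > 0$. Because $\phi_2(q)$ counts residue classes it is a non-negative integer, so in fact $\phi_2(q) \ge 1$ and an admissible class $a \pmod q$ exists.

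Finally, since the admissible class $a \pmod q$ is an infinite arithmetic progression $a, a+q, a+2q, \dots$, I can choose a representative $m > 1$; by the equivalence above this $m$ satisfies $(p, m(m+2)) = 1$ for every $p \in \mathcal{P}$. The only point requiring care is the verification that each factor in \eqref{eq_ph2} is strictly positive, the binding case being $p = 3$ where $1 - \tfrac{2}{p} = \tfrac{1}{3}$ is smallest but still positive; this is a routine check rather than a genuine obstacle, and the conceptual content lies entirely in recognizing the reduction to $\phi_2(q) \ge 1$.
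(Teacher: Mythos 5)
Your proof is correct, and it differs from the paper's in one genuinely useful way. Both arguments rest on the same foundation: apply the product formula of Theorem~\ref{th4.1} to $q=\prod_{p\in\mathcal{P}}p$ and deduce that an admissible residue class modulo $q$ (Definition~\ref{df3.1}) exists. The difference is in how the constraint $m>1$ is handled. The paper looks for the witness $m$ among the admissible residues themselves, i.e.\ with $1<m\le q$; since $a=1$ may itself be admissible (whenever $3\nmid q$), this forces the stronger inequality $\phi_2(q)\ge 2$, which actually fails for $q=2$, $q=3$ and $q=6$ (all have $\phi_2(q)=1$), and the paper consequently splits into the cases $l=1$, $l=2$, $l\ge 3$ with explicit checks when $2$ or $3$ lies in $\mathcal{P}$ (e.g.\ $m=5$ for $\mathcal{P}=\{2,3\}$). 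You instead observe that admissibility is a property of the whole residue class, so any representative $a+kq>1$ serves as a witness; this weakens the needed inequality to $\phi_2(q)\ge 1$, which follows uniformly from the strict positivity of every factor in \eqref{eq_ph2}, and all casework disappears. The case $\mathcal{P}=\{2\}$ shows your representative-shifting step is not merely cosmetic: there the unique admissible class is $a=1$, so a witness below $q$ does not exist and one must move up the progression (e.g.\ $m=3$), which your argument does seamlessly while the paper must dispose of it by hand. What the paper's version buys, when $\phi_2(q)\ge 2$ does hold, is a witness in the range $1<m\le q$; what yours buys is uniformity and the visibly stronger conclusion that there are infinitely many valid $m$ — one in each period of each admissible class — which is the fact underlying the paper's Corollary~\ref{cor5.1} as well.
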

\begin{proof}
When $l=1$ and $p\ge 5$, from Theorem \ref{th4.1}, we get $\phi_2(p) \ge 2$. This confirms the existence of at least one $m>1$ so that $(p,m(m+2))=1$. It is also 
easy to explicitly check that the theorem is correct when $p=2$ or 3.\\
When $l=2$, and two primes are 2 and 3, we can explicitly check that the theorem is valid. For example, $m=5$ works here. When one or both primes are 2 or 3, it 
is easy to see that $\phi_2(p_1p_2)\ge 2$ for two primes $p_1$ and $p_2$.\\
When $l\ge 3$, we can define $P_l = {\displaystyle \prod_{p \in \mathcal{P}}}p$, and note that $\phi_2(P_l) \ge 2$. Which confirms the existence of at least 
one $m>1$ so that $(p,m(m+2))=1$ for every prime $p \in \mathcal{P}$.
\end{proof}

\begin{corollary}
For a given $n>2$, there always exists a number $m>n$ so that $(p,m(m+2))=1$ for every prime $p\le n$.
\label{cor5.1}
\end{corollary}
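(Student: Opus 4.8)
The plan is to deduce this corollary directly from Theorem~\ref{th5.1} by specializing $\mathcal{P}$ to the set of \emph{all} primes up to $n$, and then to strengthen the conclusion from $m>1$ to $m>n$ by exploiting the periodicity of the coprimality conditions.

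First I would take $\mathcal{P}=\{p : p\le n\}$, a finite set of $l=\pi(n)$ distinct primes (with $l\ge 2$ since $n>2$ forces $2,3\in\mathcal{P}$). Theorem~\ref{th5.1} then supplies a number $m_0>1$ with $(p,m_0(m_0+2))=1$ for every prime $p\le n$. The point I would emphasize is that this condition is purely a statement about residues: for each $p\le n$ it says exactly $m_0\not\equiv 0$ and $m_0\not\equiv -2 \pmod p$, so $m_0$ occupies one of the $\phi_2(P(n))$ admissible residue classes modulo $P(n):=\prod_{p\le n}p$.

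The key step is the observation that admissibility is invariant under shifting $m$ by any multiple of $P(n)$. Indeed, if $p\le n$ then $p\mid P(n)$, so replacing $m_0$ by $m_0+kP(n)$ leaves both $m_0\bmod p$ and $(m_0+2)\bmod p$ unchanged for every $p\le n$; hence every term of the arithmetic progression $m_0,\,m_0+P(n),\,m_0+2P(n),\dots$ satisfies $(p,m(m+2))=1$ for all primes $p\le n$. Since this progression is unbounded, I would simply choose $k$ large enough that $m:=m_0+kP(n)>n$ (and note $P(n)>n$ for $n>2$, so already $k=1$ works whenever $m_0\le n$), which proves the claim.

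Because all the genuine content is carried by Theorem~\ref{th5.1} --- whose proof in turn rests on the bound $\phi_2(P_l)\ge 2$ from Theorem~\ref{th4.1} guaranteeing at least one admissible class --- I expect no real obstacle here: the passage from $m>1$ to $m>n$ is nothing more than the elementary periodicity argument above.
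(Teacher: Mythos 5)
Your proof is correct, but it takes a different route from the paper's. The paper remarks that the corollary follows from Theorem~\ref{th5.1}, yet deliberately gives a separate direct proof: with $P_n=\prod_{p\le n}p$, Theorem~\ref{th4.1} gives $\phi_2(P_n)=\prod_{2<p\le n}(p-2)\ge 1$, so at least one $m\le P_n$ has $m(m+2)$ coprime to $P_n$; and every such $m$ \emph{automatically} lies above $n$, since any integer $1<m\le n$ shares a prime factor with $P_n$, while $m=1$ is excluded because $3\mid P_n$ when $n>2$. You instead take the witness $m_0>1$ supplied by Theorem~\ref{th5.1} and translate it by multiples of $P(n)$, using the periodicity of the conditions $m\not\equiv 0,-2 \pmod p$, until it exceeds $n$. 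Both arguments are sound and rest on the same core fact, the positivity of $\phi_2(P_n)$. The paper's version buys a bit more: it shows that \emph{all} $\phi_2(P_n)$ twin coprime pairs in $[1,P_n]$ already lie in $(n,P_n)$, which is precisely the ``stronger statement'' the paper notes immediately after the corollary. Your periodicity argument is more generic --- it needs no observation that small numbers are automatically excluded, and it shows witnesses recur in every interval of length $P(n)$ --- but it localizes them less precisely. One small caveat: your parenthetical claim that Theorem~\ref{th5.1} rests on the bound $\phi_2(P_l)\ge 2$ is accurate only for $l\ge 3$; for $\mathcal{P}=\{2,3\}$ (i.e.\ $n=3$ or $4$) one has $\phi_2(6)=1$, and the paper handles that case by exhibiting $m=5$ directly. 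This does not affect your argument, since you use only the conclusion of Theorem~\ref{th5.1}, not its proof.
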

\begin{proof}
This corollary can be shown to be a consequence of Theorem \ref{th5.1}. We will give here a separate direct proof of the corollary.
Let $P_n = {\displaystyle \prod_{p\le n}}p$. From Theorem \ref{th4.1}, we have $\phi_2(P_n)={\displaystyle \prod_{odd~p\le n}}(p-2)\ge 1$. We recall that, 
between 1 and $P_n$, there are $\phi_2(P_n)$ twin coprime pairs to $P_n$. In this case all these pairs (one pair when $2<n<5$) lie above $n$. 
This proves the theorem. 
\end{proof}

This theorem tells us that it is always possible to find a twin coprime pair to the primorial ($P_n$) upto a given number ($n>2$). 
It may be mentioned here that we can have a stronger statement than the Corollary \ref{th5.1} by noting that there are $\phi_2(P_n)$ number 
of twin coprime pairs in ($n,P_n$). It is also interesting to note that the Theorem \ref{th5.1} implies that there are infinite number of primes.

According to the Hardy-Littlewood conjecture, $\pi_2(x) \sim 2C_2\frac{x}{(ln x)^2}$, where  $C_2$ is the so-called twin prime constant; this constant is 
given by the following product form: $C_2= \prod_{p>2}\frac{p(p-2)}{(p-1)^2}$. In terms of $\phi(n)$ and $\phi_2(n)$, this constant can be written in the 
following series forms,  

\begin{equation}
C_2 =  \sum_{odd~n=1}^{\infty}\frac{\mu(n)}{\phi^2(n)},~~\text{and}
\label{eq_c2}
\end{equation}
\begin{equation}
\frac{1}{C_2} =  \sum_{odd~n=1}^{\infty} \frac{\mu^2(n)}{n\phi_2(n)}.
\label{eq_inc2}
\end{equation}
Both the equations can be proved by expressing their right hand sides as Euler products (see also Lemma \ref{lm8.2}; to use it in the present context, 
leave out the sum over the even numbers from the left side and drop $p=2$ from the right side of the equation). 
The Equation \ref{eq_c2} also appears in \cite{golomb60}.

Let $P_n$ denote the product of the first $n$ primes, i.e., 
$P_n=\Pi_{k=1}^{n}p_k$. It is easy to check that 
\begin{equation}
2C_2 = {\displaystyle \lim_{n\rightarrow\infty}} \frac{P_n \phi_2(P_n)}{\phi^2(P_n)} = 
{\displaystyle \lim_{n\rightarrow\infty}} \frac{\phi_2(P^2_n)}{\phi^2(P_n)}.
\label{eq_c2ro}
\end{equation}

The last expression of $C_2$, as appears in Equation \ref{eq_c2ro}, is especially interesting here. It helps us to view the Hardy-Littlewood twin prime 
conjecture from a different angle. We explain this in the following.
\vspace{0.1cm}
\paragraph{\bf Heuristics for twin prime conjecture.} 
We take $X = P_x = \prod_{p\le x}p$ for large $x$. The number of integers which are coprime to and smaller than $X$ is $\phi(X)$. 
It may be mentioned here that all the coprimes, except 1, lie in $[x,X]$. The total number of the coprime pairs of all possible gaps is 
$\frac 12 \phi(X) ( \phi(X)-1) \sim \frac 12 \phi^2(X)$. Among these pairs, the number of the twin coprime pairs (pairs with gap = 2) 
is $\phi_2(X)$. All these twin coprime pairs lie in $[x,X]$. So in $[x,X]$, the fraction of the coprime pairs which 
are the twin coprime pairs is $\frac{2\phi_2(X)}{\phi^2(X)}$. Now the number of primes in $[x,X]$ is $\pi(X)-\pi(x)\sim \pi(X)$.
The number of the all possible prime pairs (with any non-zero gap) is 
$\frac 12 \pi(X)(\pi(X)-1) \sim \frac 12 \pi^2(X)$. Among these prime pairs, the number of the twin prime pairs in 
$[x,X]$ would then be: $\pi_2(X) -\pi_2(x) \sim \frac 12 \pi^2(X) \times \frac{\phi_2(X)}{\frac 12 \phi^2(X)}$. From the 
prime number theorem we know $\pi(X) \sim \frac{X}{\ln X}$. So for large $X$, we finally get 
$\pi_2(X) \sim  \frac{X^2}{\ln^2 X} \times \frac{\phi_2(X)}{\phi^2(X)} = \frac{X}{\ln^2 X} \times \frac{X\phi_2(X)}{\phi^2(X)}$. Now $\frac{X\phi_2(X)}{\phi^2(X)}$ 
converges to $2C_2$ as noted in Equation \ref{eq_c2ro}; this implies: $\pi_2(X) \sim 2C_2 \frac{X}{\ln^2 X}$.

Above argument can also be given in the following ways. We note that $\frac {\pi^2(X)}{\phi^2(X)}$ denotes the fraction of the coprime pairs which are the 
prime pairs (of any possible gaps). 
Now there are $\phi_2(X)$ twin coprime pairs; among them, the number of twin prime pairs would then be 
$\frac {\pi^2(X)}{\phi^2(X)} \times \phi_2(X) \sim 2C_2 \frac{X}{\ln^2 X}$. We also note that $\frac {\pi(X)}{\phi(X)}$ is the probability of a coprime to be 
a prime. So $\left(\frac {\pi(X)}{\phi(X)}\right)^2$ is the probability that the two members of a coprime pair are both primes. This suggest that the number of 
the twin prime pairs among $\phi_2(X)$ twin coprime pairs is $\left(\frac {\pi(X)}{\phi(X)}\right)^2 \times \phi_2(X) \sim 2C_2 \frac{X}{\ln^2 X}$.

In the preceding heuristics, the reason we take $X$ to be a large primorial (product of primes upto some large number) is that the density of coprimes vanishes
for this special case ($\phi(X)/X \rightarrow 0$ as $X \rightarrow \infty$; see Theorem \ref{th5.2}). This vanishing density is a desired property in our 
heuristics as we know that the density of primes has this property ($\pi(N)/N \rightarrow 0$ as $N \rightarrow \infty$). If we take any arbitrary form of $X$ 
then the density of coprimes may not vanish. We may also note here that, as $X \rightarrow \infty$, $\frac{X\phi_2(X)}{\phi^2(X)}$ goes to a limit when the special 
form of $X$ is taken. On the other hand if $N$ increases arbitrarily then the function $\frac{N\phi_2(N)}{\phi^2(N)}$ does not have any limit.

\vspace{0.4cm}
Next we discuss the estimated values of $\phi(X)/X$ and $\phi_2(X)/X$ when $X$ is the product of primes upto $x$.    
\begin{theorem}[Mertens' result]
As $x\rightarrow \infty$, we have 
${\displaystyle \prod_{p\le x}(1-\frac{1}{p}) = \frac{e^{-\gamma}}{\ln x} + O(\frac{1}{\ln^2 x})}$. Here $\gamma \approx 0.577216$ is the Euler-Mascheroni constant.
\label{th5.2}
\end{theorem}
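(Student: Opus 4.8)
The plan is to take the logarithm of the product, reduce the problem to the second Mertens estimate for $\sum_{p\le x}1/p$ together with a convergent correction series, identify the resulting additive constant as $\gamma$, and then exponentiate. Writing $\ln$ for the natural logarithm, the starting point is the first Mertens estimate $\sum_{p\le x}\frac{\ln p}{p}=\ln x+O(1)$, which I would obtain in the usual way by comparing the Legendre factorization $\ln\lfloor x\rfloor!=\sum_{p^k\le x}\lfloor x/p^k\rfloor\ln p$ with Stirling's formula $\ln\lfloor x\rfloor!=x\ln x-x+O(\ln x)$. From this, partial (Abel) summation against the weight $1/\ln t$ yields the second Mertens estimate $\sum_{p\le x}\frac1p=\ln\ln x+M+O(1/\ln x)$ for a constant $M$; the explicit $O(1/\ln x)$ remainder here is precisely what will later upgrade the final error from $o(1/\ln x)$ to $O(1/\ln^2 x)$.

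Next I would expand the logarithm of the product termwise. Since $\ln(1-1/p)=-\frac1p-\sum_{k\ge2}\frac{1}{kp^k}$, we get
\[\ln\prod_{p\le x}\Bigl(1-\frac1p\Bigr)=-\sum_{p\le x}\frac1p-\sum_{p\le x}\sum_{k\ge2}\frac{1}{kp^k}.\]
The double sum converges to $C:=\sum_{p}\sum_{k\ge2}\frac{1}{kp^k}$, and its tail is bounded by a constant times $\sum_{n>x}n^{-2}=O(1/x)$, hence is absorbed into the $O(1/\ln x)$ term. Combining with the second Mertens estimate gives $\ln\prod_{p\le x}(1-1/p)=-\ln\ln x-(M+C)+O(1/\ln x)$.

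The heart of the matter is to show that $M+C=\gamma$. This is where the Euler--Mascheroni constant genuinely enters, and it is the one step that cannot be reached by the elementary bookkeeping above. I would pin it down through the analytic behaviour of $\zeta$ at its pole: from the Euler product, $\ln\zeta(s)=\sum_p p^{-s}+C(s)$ with $C(s)=\sum_p\sum_{k\ge2}\frac{1}{kp^{ks}}\to C$ as $s\to1^+$, while the Laurent expansion $\zeta(s)=\frac{1}{s-1}+\gamma+O(s-1)$ gives $\ln\zeta(s)=-\ln(s-1)+O(s-1)$. A matching Abelian comparison of $\sum_p p^{-s}$ with the partial sums $\sum_{p\le x}1/p$ (again via partial summation and the second Mertens estimate) then forces the additive constants to agree, yielding $M+C=\gamma$; equivalently, one may simply invoke the classical identity $M=\gamma+\sum_p\bigl(\ln(1-1/p)+1/p\bigr)=\gamma-C$.

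Finally I would exponentiate. Writing the logarithm of the product as $-\ln\ln x-\gamma+O(1/\ln x)$ and using $e^{O(1/\ln x)}=1+O(1/\ln x)$ gives
\[\prod_{p\le x}\Bigl(1-\frac1p\Bigr)=\frac{e^{-\gamma}}{\ln x}\bigl(1+O(1/\ln x)\bigr)=\frac{e^{-\gamma}}{\ln x}+O\Bigl(\frac{1}{\ln^2 x}\Bigr),\]
which is the claim. The main obstacle, as indicated, is the constant evaluation $M+C=\gamma$: every elementary step only determines the answer up to an unknown additive constant, and locating that constant at exactly $\gamma$ requires genuinely analytic input about $\zeta$ near $s=1$.
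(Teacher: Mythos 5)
The paper gives no proof of this statement at all: it is quoted as a known result (Mertens' third theorem), with the remark that ``the proof of this result can be found in the most standard text books.'' Your outline is a correct rendition of exactly that standard textbook argument --- termwise logarithmic expansion, Mertens' second estimate $\sum_{p\le x}1/p=\ln\ln x+M+O(1/\ln x)$, identification of the additive constant as $\gamma$ via the Laurent expansion of $\zeta(s)$ at $s=1$ (equivalently the identity $M=\gamma-\sum_p\sum_{k\ge2}\frac{1}{kp^k}$), and exponentiation --- so you have in effect supplied the proof the paper omits, and there is no discrepancy to report.
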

The proof of this result can be found in the most standard text books.

\begin{theorem}
As $x\rightarrow \infty$, we have ${\displaystyle (1- \frac 12) \prod_{2<p\le x}(1-\frac{2}{p}) = 2C_2\frac{e^{-2\gamma}}{\ln^2 x} + O(\frac{1}{\ln^3 x})}$. Here 
$C_2$ is the twin prime constant and $\gamma$ is the Euler-Mascheroni constant. 
\label{th5.3}
\end{theorem}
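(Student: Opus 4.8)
The plan is to factor each Euler factor $1-\frac 2p$ through the square of the Mertens factor $1-\frac 1p$ together with a rapidly convergent twin-prime correction, and then invoke Theorem \ref{th5.2}. First I would use the algebraic identity
\begin{equation}
1-\frac 2p = \left(1-\frac 1p\right)^2 \frac{p(p-2)}{(p-1)^2} = \left(1-\frac 1p\right)^2\left(1-\frac{1}{(p-1)^2}\right),\nonumber
\end{equation}
so that, writing $C_2(x):=\prod_{2<p\le x}\frac{p(p-2)}{(p-1)^2}$, the left-hand side of the theorem becomes
\begin{equation}
\left(1-\frac 12\right)\prod_{2<p\le x}\left(1-\frac 2p\right) = \frac 12\left(\prod_{2<p\le x}\left(1-\frac 1p\right)\right)^2 C_2(x).\nonumber
\end{equation}

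The second step is to control $C_2(x)$. Since $\frac{p(p-2)}{(p-1)^2}=1-\frac{1}{(p-1)^2}$ and $\sum_{p>x}\frac{1}{(p-1)^2}=O(1/x)$ by comparison with $\sum_{n>x}n^{-2}$, the infinite product converges to the twin prime constant $C_2=\prod_{p>2}\frac{p(p-2)}{(p-1)^2}$ with $C_2(x) = C_2 + O(1/x)$. Because $1/x$ is smaller than any fixed power of $1/\ln x$, this remainder is harmless for the stated error term.

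The third step applies Mertens' result. Factoring out the prime $2$ gives $\prod_{2<p\le x}(1-\frac 1p) = 2\prod_{p\le x}(1-\frac 1p)$, so Theorem \ref{th5.2} yields $\prod_{2<p\le x}(1-\frac 1p) = \frac{2e^{-\gamma}}{\ln x} + O(1/\ln^2 x)$. Squaring and keeping the cross term gives
\begin{equation}
\left(\prod_{2<p\le x}\left(1-\frac 1p\right)\right)^2 = \frac{4e^{-2\gamma}}{\ln^2 x} + O\left(\frac{1}{\ln^3 x}\right).\nonumber
\end{equation}
Multiplying by the factor $\frac 12$ and by $C_2(x)=C_2+O(1/x)$ then produces $\frac{2C_2 e^{-2\gamma}}{\ln^2 x} + O(1/\ln^3 x)$, exactly the claimed asymptotic.

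The computation is essentially mechanical; the only point requiring genuine care is the bookkeeping of error terms. When squaring Mertens' estimate, the leading error is the cross term $2\cdot\frac{2e^{-\gamma}}{\ln x}\cdot O(1/\ln^2 x) = O(1/\ln^3 x)$, which is what fixes the final error, and one must check that neither the scalar $\frac 12$ nor the factor $C_2+O(1/x)$ degrades it. Accordingly, I expect the main (and essentially the only) obstacle to be verifying that the tail $\sum_{p>x}(p-1)^{-2}$ is $O(1/x)$, so that $C_2(x)$ converges fast enough to be absorbed into $O(1/\ln^3 x)$ rather than contributing at order $1/\ln^2 x$; this is the step that justifies treating the twin-prime correction as a constant to the required precision.
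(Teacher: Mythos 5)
Your proof is correct and follows essentially the same route as the paper's: both factor each term via $1-\frac 2p = \left(1-\frac 1p\right)^2\frac{p(p-2)}{(p-1)^2}$, estimate the squared Mertens product by Theorem \ref{th5.2}, and let the remaining partial product supply the constant $C_2$. Your version is in fact slightly more careful, since you quantify the convergence $C_2(x)=C_2+O(1/x)$ via the tail bound $\sum_{p>x}(p-1)^{-2}=O(1/x)$, whereas the paper only invokes the limit $\prod_{2<p\le x}\frac{1-2/p}{(1-1/p)^2}\to C_2$ without addressing its rate.
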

\begin{proof}
It is possible to prove Theorem \ref{th5.3} directly using the result from Theorem \ref{th5.2}. It can be done in the following way.\\
${\displaystyle (1- \frac 12) \prod_{2<p\le x}(1-\frac{2}{p}) = \left(2\prod_{2<p\le x} \frac{1-\frac 2p}{(1- \frac 1p)^2}\right) \prod_{p\le x}(1- \frac 1p)^2}$. \\
Now as $x\rightarrow \infty$, ${\displaystyle 2\prod_{2<p\le x} \frac{1-\frac 2p}{(1- \frac 1p)^2} = 2C_2}$ and the last part can be estimated from Mertens' second
result, ${\displaystyle \prod_{p\le x}(1- \frac 1p)^2 = \frac{e^{-2\gamma}}{\ln^2 x} + O(\frac{1}{\ln^3 x})}$. This finally gives us\\
${\displaystyle (1- \frac 12) \prod_{2<p\le x}(1-\frac{2}{p}) = 2C_2\frac{e^{-2\gamma}}{\ln^2 x} + O(\frac{1}{\ln^3 x})}$. 
\end{proof}

\vspace{0.4cm}
The function $\phi_2(n)$ also gives a non-trivial upper bound for $\pi_2(n)$. We make a formal statement in terms of the following theorem.
\begin{theorem}
For $n>2$, $\pi_2(n)\le \phi_2(n)+\omega(n)$.
\label{th5.4}
\end{theorem}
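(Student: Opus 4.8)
The plan is to compare the twin primes counted by $\pi_2(n)$ with the admissible residues counted by $\phi_2(n)$, via the obvious map sending a pair to the residue of its first member, and then to show that only the pairs ``spoiled'' by the prime factors of $n$ escape this comparison. Concretely, I would split the twin prime pairs $(p,p+2)$ with $p\le n$ into those that are twin coprime pairs to $n$ (call them \emph{good}) and those that are not (\emph{bad}). A pair is good exactly when $(p,n)=1$ and $(p+2,n)=1$; since $p$ and $p+2$ are both prime, being bad means precisely that $p\mid n$ or $(p+2)\mid n$.

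First I would handle the good pairs. For such a pair $p<n$ (note $p=n$ forces $(p,n)=n>1$, so good pairs satisfy $p<n$), and the residue $p \bmod n = p$ is an admissible class in the sense of Definition \ref{df3.1}. Distinct good pairs have distinct first members, hence distinct admissible residues, so the number of good pairs is at most $\phi_2(n)$. Moreover every twin prime first member satisfies $p\ge 3$, so the residue $1$ is never used; consequently, whenever $1$ is itself admissible (which happens exactly when $3\nmid n$) the number of good pairs is in fact at most $\phi_2(n)-1$.

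Next I would bound the bad pairs. Each bad pair $(p,p+2)$ has $p\mid n$ or $(p+2)\mid n$; assigning it the prime divisor $p$ in the first case and $p+2$ in the second gives a map into the $\omega(n)$ prime divisors of $n$. This map is injective except possibly when two pairs are sent to the same prime $q$, which (since a pair with first member $q$ and a pair with second member $q$ would jointly require $q-2,q,q+2$ all prime) can only occur for the triple $3,5,7$, i.e. for $q=5$ with the pairs $(3,5)$ and $(5,7)$; and this collision requires $3\nmid n$. Hence the number of bad pairs is at most $\omega(n)$, except possibly $\omega(n)+1$ in the single case $5\mid n$ and $3\nmid n$.

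Finally I would combine the two counts. When $3\mid n$ there is no collision, so $\pi_2(n)\le \phi_2(n)+\omega(n)$ directly. When $3\nmid n$, the one possible surplus bad pair is absorbed by the one spare admissible residue ($a=1$) identified above, again giving $\pi_2(n)\le \phi_2(n)+\omega(n)$. The main obstacle is exactly this bookkeeping around the exceptional triple $3,5,7$: the naive version of the argument only yields the weaker bound $\phi_2(n)+2\omega(n)$, and recovering the sharp constant $\omega(n)$ rests on noticing that $q-2,q,q+2$ cannot be simultaneously prime except at $q=5$, together with the observation that the resulting loss is precisely compensated by the unused admissible class $a=1$ (which is available exactly in the offending case $3\nmid n$).
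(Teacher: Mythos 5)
Your proof is correct; note, though, that the paper itself never proves Theorem \ref{th5.4} --- it is stated bare, and Section \ref{sec7} supplies proofs only for Theorems \ref{th4.1} and \ref{th6.1} --- so your argument fills a genuine gap rather than paralleling an existing one. The decomposition you use (pairs $(p,p+2)$ with $p\le n$ that are twin coprime pairs to $n$, versus pairs with $p\mid n$ or $(p+2)\mid n$) is surely what the author intended, since the companion fact that twin coprime pairs inject into the $\phi_2(n)$ admissible residues via $p\mapsto p\bmod n$ is already implicit in the proof of Corollary \ref{cor5.1}. But, as you observe, the naive count of the exceptional pairs gives only $2\omega(n)$, and your two refinements are exactly what the sharp constant requires: (i) a prime divisor $q$ of $n$ can be charged by two bad pairs only when $q-2$, $q$, $q+2$ are all prime, which forces $q=5$ (one of three consecutive odd numbers is divisible by $3$ --- this deserves the one-line justification you omitted), and the collision moreover forces $5\mid n$ and $3\nmid n$; (ii) exactly when $3\nmid n$, the class $a=1$ is admissible but is never the residue of a good pair (every good pair has residue equal to its first member, which lies in $[3,n)$), so the good pairs number at most $\phi_2(n)-1$ and the surplus bad pair is absorbed. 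The two cases $3\mid n$ and $3\nmid n$ then both yield $\pi_2(n)\le\phi_2(n)+\omega(n)$, and the bookkeeping checks out on the tight small cases (e.g. $n=5$, where the good count is $0\le\phi_2(5)-1=2$ and the bad count is $2=\omega(5)+1$, or $n=30$, where $\pi_2(30)=5\le 3+3$).
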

By taking $n$ to be a large primorial (product of primes upto about $\ln n$), we see that for $n \rightarrow \infty$, using Theorem \ref{th5.3}, 
$\pi_2(n) \le \phi_2(n)+\omega(n) \sim \frac {2C_2 n e^{-2\gamma}}{(\ln \ln n)^2}$. This upper bound, of-course, is not very tight; a much better bound can be 
found directly from $\pi(n)$. We note that, except 5, no other prime number is part of two twin prime pairs; accordingly: 
$\pi_2(n)\le \frac 12 \pi(n) \sim \frac{n}{2 \ln n}$.

\section{Eratosthenes sieve and Legendre's formula for twin primes} 
\label{sec6}

The Eratosthenes sieve gives us an algorithm to find the prime numbers in a list of integers. The Legendre's formula uses the algorithm to write the prime-counting
function in the following mathematical form: $\pi(x) - \pi(\sqrt{x})+1 = \sum_{d|P(\sqrt{x})} \mu(d) \left[\frac{x}{d}\right]$, where $P(z) = \prod_{p\le z}p$. Here
we first discuss a similar sieve to detect the twin primes in $[1,x]$. This sieve may be called the Eratosthenes sieve for the twin primes. We use this sieve then 
to write a mathematical formula for twin-prime counting function - which may be called the Legendre's formula for twin primes. 
\vspace{0.1cm}
\paragraph{\bf Eratosthenes sieve for twin primes.}
Assume that we are to find out the (first members) of twin primes in $[1,N]$ with $N>2$. To start the sieving process, we first take a list of integers from 2 to 
$N+2$. We then carry out the following steps. Take the lowest integer ($n$) available in the list (initially $n=2$), and then remove (slashes ``/" in the example 
below) the integers from the list which are multiples of but greater than $n$. Then remove (strike-outs ``---" in the example below) the 
integers from the list which are two less than the integers removed in the last step. Circle the lowest integer available in the list (3 in the first instance). 
Now repeat the process with the circled number (new $n$) and continue the process till we reach $N$ in the list. 
At the end of the process, the circled numbers are the (first members) of the twin primes in $[1,N]$.

\vspace{0.1cm}
\noindent {\bf Example.} To find the (first members) of twin primes below 30, we take the following list of numbers from 2 to 32. We then carry out the above 
mentioned steps to circle out the twin primes below 30.

\noindent \sout{2}  \circled{$\left. 3\right.$}  \sout{$\slashed{4}$}  \circled{$\left. 5\right.$}  \sout{$\slashed{6}$}  \sout{7}  \sout{$\slashed{8}$}  
$\slashed{9}$  \sout{$\slashed{10}$}  
\circled{11} \sout{$\slashed{12}$}  \sout{13}  \sout{$\slashed{14}$}  $\slashed{15}$  \sout{$\slashed{16}$}  \circled{17}  \sout{$\slashed{18}$}  \sout{19}  
\sout{$\slashed{20}$}  $\slashed{21}$  \sout{$\slashed{22}$}  \sout{23}  \sout{$\slashed{24}$}  \sout{$\slashed{25}$}  \sout{$\slashed{26}$}  $\slashed{27}$  
\sout{$\slashed{28}$}  \circled{29}  \sout{$\slashed{30}$}  31  $\slashed{32}$

\vspace{0.1cm}
\noindent So the twin primes (first members) below 30 are 3, 5, 11, 17 and 29.

\vspace{0.1cm}
\paragraph{\bf Legendre's formula for twin primes.}
The main idea behind the sieve above can be used to find a formula for $\pi_2(x)$. We write this formula as a theorem below. The proof of the theorem is give in
Section \ref{sec7}.
\begin{theorem}
If $P(z) = {\displaystyle \prod_{p\le z}}p$ and $[x]$ denotes the greatest integer $\le x$, we have 
\begin{equation}
\begin{split}
\pi_2(x) - \pi_2(\sqrt{x}) & = \sum_{ab|P(\sqrt{x})}\mu(ab) \left[\frac{x-l_{a,b}}{ab}\right], 
\end{split}
\label{eq6.1}
\end{equation}
where $a$ takes only odd values and $b$ takes both odd and even values. 
Here $l_{a,b}$ is the lowest (positive) integer which is divisible by $a$ and simultaneously $l_{a,b}+2$ is divisible by $b$. If we consider $l_{a,b}=at_{a,b}$,
then $t_{a,b}$ is the solution of the congruence $a t_{a,b}+2\equiv 0~(mod~b)$ with $0< t_{a,b} < b$ when $b>1$. For $b=1$, we take $t_{a,b}=1$. 
\label{th6.1}
\end{theorem}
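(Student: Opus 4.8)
The plan is to realize the right-hand side as a Legendre-type inclusion--exclusion count and then match the surviving integers with genuine twin primes, exactly parallel to the classical proof of $\pi(x)-\pi(\sqrt x)+1=\sum_{d\mid P(\sqrt x)}\mu(d)[x/d]$ recalled at the start of this section. I would first fix the sifting set $U=\{1,2,\dots,\lfloor x\rfloor\}$ and, for each prime $p\le\sqrt x$, the ``bad'' set $B_p=\{n\in U:\ p\mid n(n+2)\}$, the point being that $p$ kills $n$ or $n+2$ precisely when $p\mid n(n+2)$. Inclusion--exclusion over the primes dividing $P(\sqrt x)$ then gives
\[
\#\Big(U\setminus\bigcup_{p\le\sqrt x}B_p\Big)=\sum_{M\mid P(\sqrt x)}\mu(M)\,N(M),\qquad N(M)=\#\{n\le x:\ p\mid n(n+2)\ \text{for all}\ p\mid M\}.
\]

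The combinatorial heart is the evaluation of $N(M)$ for squarefree $M\mid P(\sqrt x)$. For an odd prime $p$ the events $p\mid n$ and $p\mid(n+2)$ are mutually exclusive (their difference is $2$), so $p\mid n(n+2)$ splits into two residue classes modulo $p$; for $p=2$ the two events coincide and give a single class. Hence each factorization $M=ab$ with $a$ \emph{odd} (so the prime $2$, if present, is forced into $b$) and $(a,b)=1$ corresponds to exactly one residue class modulo $ab$, namely $a\mid n$ and $b\mid(n+2)$, whose least positive representative is $l_{a,b}$ and whose period is $ab$ by the Chinese Remainder Theorem. Checking that $l_{a,b}=a\,t_{a,b}$ with $t_{a,b}$ the stated solution of $a t+2\equiv0\pmod b$, and that this class contains $\big[\tfrac{x-l_{a,b}}{ab}\big]+1$ integers $\le x$, is then routine. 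Summing over the $2^{\omega_o(M)}$ such factorizations, then over $M$, and using $\mu(M)=\mu(ab)$, I would obtain
\[
\#\Big(U\setminus\bigcup_{p\le\sqrt x}B_p\Big)=\sum_{ab\mid P(\sqrt x),\,a\ \mathrm{odd}}\mu(ab)\Big[\tfrac{x-l_{a,b}}{ab}\Big]+\sum_{ab\mid P(\sqrt x),\,a\ \mathrm{odd}}\mu(ab).
\]

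The extra ``$+1$ per class'' contributes $\sum_{ab\mid P(\sqrt x)}\mu(ab)=\sum_{M\mid P(\sqrt x)}\mu(M)2^{\omega_o(M)}=\sum_{d\mid P(\sqrt x)}\mu_2(d)$, which \emph{vanishes} by Lemma \ref{lm4.1} since $P(\sqrt x)$ is even once $x\ge4$. This is the clean algebraic step that makes the right-hand side equal the sieve count exactly, with no surviving analogue of the classical $+1$.

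The step I expect to be the real obstacle is the final identification of the sieve count with $\pi_2(x)-\pi_2(\sqrt x)$. Any $n\le x$ coprime to $P(\sqrt x)$ with $n>1$ is prime (a composite $n\le x$ has a prime factor $\le\sqrt n\le\sqrt x$), and such a prime must exceed $\sqrt x$; together with the fact that $1<n\le\sqrt x$ is automatically sifted out, the survivors are essentially the twin primes with first member in $(\sqrt x,x]$. The delicacy, absent in the one-variable Legendre formula, is that the \emph{second} coordinate $n+2$ can reach $x+2$, so $\sqrt{n+2}$ may exceed $\sqrt x$ and coprimality of $n+2$ to $P(\sqrt x)$ no longer forces primality (for instance $n+2$ a prime square near the top of the range). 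I would therefore isolate the boundary integers $n\in\{x-1,x\}$ and show they either do not arise or are accounted for, so that the count reduces to $\pi_2(x)-\pi_2(\sqrt x)$; pinning down this boundary correction precisely is where the argument will require the most care.
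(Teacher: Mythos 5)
Your construction is, step for step, the paper's own proof in Section~\ref{sec7}: the same bad sets $B_p=\{n\le x:\ p\mid n(n+2)\}$, the same appeal to cross-classification, the same decomposition of each $N(M)$ into $2^{\omega_o(M)}$ residue classes indexed by the factorizations $M=ab$ with $a$ odd, the same count $1+\left[\frac{x-l_{a,b}}{ab}\right]$ per class, and the same cancellation of the accumulated $+1$'s (you invoke Lemma~\ref{lm4.1}, the paper invokes $\sum_{d\mid n}\mu(d)=0$ for $n>1$; the two are equivalent here since both amount to $\sum_{M\mid P(\sqrt{x})}\mu(M)2^{\omega_o(M)}=0$ for even $P(\sqrt{x})$). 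Everything up to your final paragraph is complete and matches the paper.

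The step you deferred is a genuine gap, and it cannot be closed as stated, because the identification of the sieve count with $\pi_2(x)-\pi_2(\sqrt{x})$ is actually \emph{false} for some $x$. Take $x=168$: the sieving primes are $2,3,5,7,11$, and $n=167$ survives, since $167$ is prime and $169=13^2$ has no prime factor $\le\sqrt{168}$, yet $(167,169)$ is not a twin prime pair. The survivors are the nine twin primes $17,29,41,59,71,101,107,137,149$ in $(\sqrt{x},x]$ together with $167$, so the sieve count is $10$, whereas $\pi_2(168)-\pi_2(\sqrt{168})=12-3=9$. Since the right-hand side of \eqref{eq6.1} equals the sieve count exactly (inclusion--exclusion is exact and the $+1$'s cancel), Theorem~\ref{th6.1} itself is off by one at such $x$; in general the sieve count equals $\pi_2(x)-\pi_2(\sqrt{x})$ plus the number of primes $n\in(x-2,x]$ for which $n+2$ is a product of two primes each exceeding $\sqrt{x}$ (plus one more for the survivor $n=1$ when $4\le x<9$). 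Note that the paper's proof does not handle this either: it silently asserts $\pi_2(x)-\pi_2(\sqrt{x})=N\bigl(S-\bigcup_k S_k\bigr)$ in Equation~\eqref{pi2x}, which is exactly the false identification you were worried about. So your closing instinct is correct, but the honest conclusion is not that a more careful boundary analysis will finish the proof; it is that the formula requires either an explicit boundary correction term or a restriction to those $x$ (the vast majority) for which no prime $n\in(x-2,x]$ has $n+2$ equal to such a semiprime.
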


\section{Proofs of Theorems \ref{th4.1} and \ref{th6.1} }
\label{sec7}
We will prove Theorem \ref{th4.1} by two methods - first by the principle of cross-classification (inclusion-exclusion principle) and then by the 
Chinese remainder theorem. The first method is also used for proving Theorem \ref{th6.1}.

\vspace{0.1cm}
\noindent {\bf Proof of Theorem \ref{th4.1}}\\
{\it Method 1:} We first assume that $n$ is an odd number and $p_1, \cdots, p_r$ are the distinct prime divisors of $n$. 
Let $S = \{1,2,\cdots,n\}$ and $S_k = \{m\le n : p_k|m(m+2)\}$. 
It is clear that
\begin{equation}
\phi_2(n) = N\left(S -\bigcup_{i=1}^r S_i\right), \nonumber
\end{equation}
where $N(T)$ denotes the number of elements in a subset $T$ of $S$. Now by the principle of cross-classification (see Theorem \ref{th8.2} for notations and 
formal statement), we have
\begin{equation}
\phi_2(n) = N(S) - \sum_{1\le i \le r}N(S_i) + \sum_{1\le i<j \le r}N(S_iS_j)- \cdots + (-1)^rN(S_1S_2\cdots S_r). \nonumber
\end{equation}
We have $N(S)=n$ and $N(S_i)=2\frac{n}{p_i}$. 
In general, to calculate $N(S_{k_1}\cdots S_{k_l})$, we write $N(S_{k_1}\cdots S_{k_l}) = {\displaystyle \sum_{d|P_l}} N(A_{P_l}^{d})$ where $P_l$ is the product 
of given $l$ primes corresponding to the sets $S_{k_1}, S_{k_2}, \cdots, S_{k_l}$, and $A_{P_l}^{d}=\{m\le n : d|m ~\text{and}~ d'|m+2\}$ with $dd' = P_l$.
For $d'=1$, $N(A_{P_l}^{d})=\frac {n}{P_l}$. For $d'>1$, $N(A_{P_l}^{d})$ is given by the number of solutions for $a$ in the congruence 
$d a + 2 \equiv 0 ~(\text{mod}~d')$ so that  $0<d a \le n$. This congruence has a 
unique solution modulo $d'$; let $t$ be the solution where $1\le t < d'$. All the solutions for $a$ would then be in the form $t+s d'$ where $s$ is so that 
$0<(t+s d') d \le n$. We have now $s \le \frac {n}{P_l} - \frac{t}{d'}$. Since $\frac {n}{P_l}$ is an integer and $\frac{t}{d'}<1$, the maximum value of $s$ can 
be $\frac {n}{P_l}-1$. On the other hand, the minimum value of $s$ is 0. This gives $N(A_{P_l}^{d})= \frac {n}{P_l}$. Since there are $2^l$ distinct divisors of 
$P_l$, we get $N(S_{k_1}\cdots S_{k_l}) = 2^l \frac {n}{P_l}$. Now using the principle of cross-classification, we get 
$\phi_2(n) = n - 2\sum_{1\le i \le r} \frac{n}{p_i} + 2^2 \sum_{1\le i<j \le r} \frac{n}{p_ip_j} - \cdots (-1)^r 2^r \frac{n}{p_1 \cdots p_r} = 
n \sum_{d|n} \frac{\mu_2(d)}{d}$.

If $n$ is even, we can take $p_1=2$ and correspondingly $S_1=\{m\le n : p_1|m \}$. We now have $N(S_1)=\frac{n}{p_1}$. To find $N(S_1S_{k_1}\cdots S_{k_l})$, where
the sets $S_{k_1}, S_{k_2}, \cdots, S_{k_l}$ correspond to $l$ different odd primes, we write 
$N(S_1S_{k_1}\cdots S_{k_l}) = {\displaystyle \sum_{d|P_l}} N(A_{P_l}^{d})$. Here $P_l$ is the product of given $l$ odd primes, and 
$A_{P_l}^{d}=\{m\le n : d|m ~\text{and}~ 2d'|m+2\}$ with $dd' = P_l$. For $d'=1$, $N(A_{P_l}^{d})=\frac {n}{2P_l}$.
For $d'>1$, $N(A_{P_l}^{d})$ is given by 
the number of solutions for $a$ in the congruence $d a + 2 \equiv 0 ~(\text{mod}~2d')$ so that  $0<da \le n$. Proceeding 
as before, we find that $N(A_{P_l}^{d})= \frac {n}{2P_l}$ and $N(S_1S_{k_1}\cdots S_{k_l}) = 2^l \frac {n}{2P_l}$.
Using these values, we finally get $\phi_2(n)=n \sum_{d|n} \frac{\mu_2(d)}{d}$. We may here note that the distinctiveness of even $n$ is taken care by 
$\mu_2(n)$ which is defined by $\mu_2(n)=\mu(n)2^{\omega_o(n)}$ where $\omega_o(n)$ is the number of odd prime divisors of $n$.

\vspace{0.2cm}
\noindent {\it Method 2:} Let $n=p_1^{a_1}p_2^{a_2}\cdots p_r^{a_r}$. Now consider the system of congruences with $i$-th congruence as 
$x\equiv b_i$ (mod $p_i^{a_i}$), where $(b_i,p_i)=(b_i+2,p_i)=1$ and $0<b_i<p_i^{a_i}$. In the congruence, $b_i$ can take $\phi_2(p_i^a)$ values, where, 
as discussed in Section \ref{sec3}, $\phi_2(p^a) = p^a(1-\frac 2p)$ for an odd prime $p$, and $\phi_2(p^a) = p^a(1-\frac 1p)$ for the even prime $p=2$.  
For a given set $(b_1,b_2,\cdots b_r)$, the system of congruences has a unique solution modulo $n$ according to the Chinese remainder theorem. So for 
all possible values of $(b_1,b_2,\cdots b_r)$, there are $\prod_r \phi_2(p_r^{a_r})$ unique solutions which lie above 0 and below $n$. 

\vspace{0.1cm}
\noindent {\bf Proof of Theorem \ref{th6.1}}\\
To find the number of twin primes in $[\sqrt{x},x]$, we can suitably use the concept behind the Eratosthenes sieve for twin primes discussed in Section \ref{sec6}. 
To translate this concept in the mathematical form, we adopt here the approach which is used in the proof (Method 1) of Theorem \ref{th4.1}. 

As before, we take $p_1=2$ and $S_1=\{m\le x : p_1|m \}$. For any odd prime $p_k$, we take $S_k = \{m\le x : p_k|m(m+2)\}$. 
This sets help us to write
\begin{equation}
\pi_2(x)-\pi_2(\sqrt{x}) = N\left(S -\bigcup_{1\le k \le r} S_k\right), \nonumber
\end{equation}
where $r = \pi(\sqrt{x})$ and $N(T)$ denotes the number of elements in a subset $T$ of $S$. Now by the principle of cross-classification (Theorem \ref{th8.2}), 
we have
\begin{equation}
\begin{split}
\pi_2(x)-\pi_2(\sqrt{x}) = & N(S) - \sum_{1\le i \le r}N(S_i) + \sum_{1\le i<j \le r}N(S_iS_j)- \cdots \\
 & + (-1)^rN(S_1S_2\cdots S_r). 
\end{split}
\label{pi2x}
\end{equation}
We have $N(S)=[x]$, $N(S_1)=[\frac{x}{p_1}]$ for $p_1=2$, and $N(S_i)=2[\frac{x}{p_i}]$ for odd primes.
In general, to find $N(S_{k_1}\cdots S_{k_l})$, where the sets $S_{k_1}, S_{k_2}, \cdots, S_{k_l}$ correspond to $l$ different odd primes,
we write $N(S_{k_1}\cdots S_{k_l}) = {\displaystyle \sum_{d|P_l}} N(A_{P_l}^{d})$ where $P_l$ is the product of
given $l$ odd primes and $A_{P_l}^{d}=\{m\le x : d|m ~\text{and}~ d'|m+2\}$ with $dd' = P_l$. For $d'=1$, $N(A_{P_l}^{d})=[\frac {x}{P_l}]$. For 
$d'>1$, $N(A_{P_l}^{d})$ is given by the number of solutions for $a$ in the congruence $d a + 2 \equiv 0 ~(\text{mod}~d')$ so that  $0<d a \le x$. This 
congruence has a unique solution modulo $d'$; let $t_{d,d'}$ be the solution where $1\le t_{d,d'} < d'$. All the solutions for $a$ would then be in the form 
$t_{d,d'}+s d'$ where $s$ is so that $0<d (t_{d,d'}+s d') \le x$. Since $t_{d,d'} < d'$, $s$ can not be a negative integer. In other way, when $d t_{d,d'}>x$ or 
$s \le \frac {x}{P_l} - \frac{t_{d,d'}}{d'} < 0$, then we do not have any solution for $a$; hence in such a case $N(A_{P_l}^{d})=0$. Since $\frac{t_{d,d'}}{d'}<1$, 
we have $-1<\frac {x}{P_l} - \frac{t_{d,d'}}{d'}$. So in the case when $d t_{d,d'}>x$, we can write  
$N(A_{P_l}^{d})=1+\left[ \frac {x}{P_l} - \frac{t_{d,d'}}{d'} \right]$. When $d t_{d,d'}\le x$ or $\frac {x}{P_l} - \frac{t_{d,d'}}{d'}\ge 0$, then $s$ can take 
any value from 0 to $\left[\frac {x}{P_l} - \frac{t_{d,d'}}{d'}\right]$. 
Therefore, in this case also $N(A_{P_l}^{d})=1+\left[ \frac {x}{P_l} - \frac{t_{d,d'}}{d'} \right]$.

To find $N(S_1S_{k_1}\cdots S_{k_l})$, where
the sets $S_{k_1}, S_{k_2}, \cdots, S_{k_l}$ correspond to $l$ different odd primes, we write 
$N(S_1S_{k_1}\cdots S_{k_l}) = {\displaystyle \sum_{d|P_l}} N(A_{P_l}^{d})$.
Here $P_l$ is the product of given $l$ odd primes, and $A_{P_l}^{d}=\{m\le x : d|m ~\text{and}~ 2d'|m+2\}$ with $dd' = P_l$. 
For $d'=1$, $N(A_{P_l}^{d})=[\frac {x}{2P_l}]$. For $d'>1$, $N(A_{P_l}^{d})$ is given by
the number of solutions for $a$ in the congruence $da + 2 \equiv 0 ~(\text{mod}~2d')$ so that  $0<da \le x$. Let $t_{d,2d'}$
be the unique solution of the congruence, where $0 < t_{d,2d'} < 2d'$. All the solutions for $a$ would then be in the form $t_{d,2d'}+2s d'$
where $s$ is so that $0< d (t_{d,2d'}+2s d') \le x$. Proceeding as for the odd $n$ case, we find that 
$N(A_{P_l}^{d})=1+\left[ \frac {x}{2P_l} - \frac{t_{d,2d'}}{2d'} \right]$.

If we denote $N(S)$ by $N(1)$, $N(S_i)$ by $N(p_i)$, $N(S_iS_j)$ by $N(p_ip_j)$ and so on, we can rewrite Equation \ref{pi2x} in the following way,
\begin{equation}
\pi_2(x)-\pi_2(\sqrt{x}) = \sum_{a|P(\sqrt{x})}\mu(a)N(a),
\label{pi2x2}
\end{equation}
where $P(\sqrt{x})= {\displaystyle \prod_{p\le\sqrt{x}}p}$. Now $N(a)=\sum_{d|a}' N(A_{P_l}^{d})$ where $\sum'$ indicates the sum over only the odd divisors ($d$) 
of $a$ and $N(A_{P_l}^{d})=1+\left[ \frac {x}{a} - \frac{t_{d,d'}}{d'} \right]$ with $d'=a/d$ for $d'>1$. Here $0<t_{d,d'}<d'$ and 
$d t_{d,d'} +2 \equiv 0  ~(\text{mod}~d')$. For $d'=1$, we take $t_{d,d'}=1$. 
Using the result that $\sum_{d|n}\mu(d) = 0$ for $n>1$, we finally get from Equation \ref{pi2x2}, 
$\pi_2(x)-\pi_2(\sqrt{x}) = {\displaystyle \sum_{dd'|P(\sqrt{x})}\mu(dd')\left[ \frac {x}{dd'} - \frac{t_{d,d'}}{d'} \right]}$, where $d$ is always odd and $d'$ 
can take both odd and even values.

\section{Some standard results}
\label{sec8}

In this section we will provide some useful standard results without their proofs.

\begin{theorem}
Consider two functions $F(s)$ and $G(s)$ which are represented by the following two Dirichlet series,
\begin{equation}
\begin{split}
F(s) &= \sum_{n=1}^{\infty}\frac{f(n)}{n^s} ~~for~~ Re(s)>a, ~~and \\ \nonumber
G(s) & = \sum_{n=1}^{\infty}\frac{g(n)}{n^s} ~~for~~ Re(s)>b.
\end{split}
\end{equation}
Then in the half-plane where both the series converge absolutely, we have
\begin{equation}
F(s)G(s)= \sum_{n=1}^{\infty}\frac{h(n)}{n^s}, \nonumber
\end{equation}
where $h=f\ast g$, the Dirichlet convolution of $f$ and $g$:
\begin{equation}
h(n) = \sum_{d|n}f(d)g(\frac nd). \nonumber
\end{equation}
\label{th8.1}
\end{theorem}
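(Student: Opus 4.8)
The plan is to form the formal product of the two Dirichlet series, view it as a double series indexed by ordered pairs $(m,k)$, and then regroup the terms according to the value of the product $n=mk$. Fix $s$ with $\sigma:=\mathrm{Re}(s)>\max(a,b)$, so that both series converge absolutely. First I would write
\[
F(s)G(s) = \left(\sum_{m=1}^{\infty}\frac{f(m)}{m^s}\right)\left(\sum_{k=1}^{\infty}\frac{g(k)}{k^s}\right)
\]
and expand the product as the double series $\sum_{m,k}\frac{f(m)g(k)}{(mk)^s}$.

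The crucial analytic point -- the step I expect to be the main obstacle -- is justifying that this double series may be summed in any order and regrouped at will. Here I would appeal to the theory of absolutely convergent double series: the series of absolute values factors as
\[
\sum_{m,k}\left|\frac{f(m)g(k)}{(mk)^s}\right| = \left(\sum_{m=1}^{\infty}\frac{|f(m)|}{m^{\sigma}}\right)\left(\sum_{k=1}^{\infty}\frac{|g(k)|}{k^{\sigma}}\right),
\]
which is a product of two convergent series of nonnegative terms and is therefore finite. Absolute convergence of the double series then guarantees that its sum is independent of the order of summation and that the terms may be collected into any partition of the index set $\mathbb{N}\times\mathbb{N}$. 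This is the one place where the hypothesis of absolute convergence is genuinely used; everything else is bookkeeping.

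With the rearrangement justified, I would partition the pairs $(m,k)$ by the value $n=mk$. For fixed $n$, the pairs with $mk=n$ are precisely those of the form $(d,n/d)$ with $d\mid n$, and each contributes $\frac{f(d)g(n/d)}{n^s}$. Summing over them gives the coefficient $\frac{1}{n^s}\sum_{d|n}f(d)g(n/d)=\frac{h(n)}{n^s}$, and collecting over all $n$ yields $F(s)G(s)=\sum_{n=1}^{\infty}\frac{h(n)}{n^s}$, the claimed identity. The one remaining detail is that the resulting series itself converges absolutely in the same half-plane; this follows from the bound $|h(n)|\le\sum_{d|n}|f(d)|\,|g(n/d)|$ together with the same product estimate used above.
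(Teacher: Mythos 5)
Your proof is correct and complete: the key analytic point is exactly where you put it, namely that absolute convergence of both series makes the double series $\sum_{m,k} f(m)g(k)/(mk)^s$ absolutely convergent, which licenses regrouping the terms by the value $n=mk$ to produce the convolution coefficients $h(n)=\sum_{d\mid n} f(d)g(n/d)$. For comparison, the paper does not prove this statement at all --- it is placed in the section of standard results and attributed to Apostol's textbook --- and your argument is precisely the standard proof found there, so you have simply supplied the proof the paper delegates to its reference.
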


The series for $F(s)$ and $G(s)$ are absolutely convergent respectively for $Re(s)>a$ and $Re(s)>b$. The proof of the theorem can be
found in any standard number theory text book (e.g. chapter 11, \cite{apostol76}).

\begin{lemma}
For a multiplicative function $f$, we have
\begin{equation}
\begin{split}
\sum_{d|n}\mu(d)f(d) = &\prod_{p|n}(1-f(p)) ~~\text{and} \\\nonumber
\sum_{d|n}\mu^2(d)f(d) = &\prod_{p|n}(1+f(p)).
\end{split}
\end{equation}
\label{lm8.1}
\end{lemma}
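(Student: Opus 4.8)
The plan is to exploit the multiplicative structure of both sides and reduce each identity to a computation over prime powers. The key elementary facts I would invoke are: (i) the pointwise product of two multiplicative functions is again multiplicative, so that both $\mu f$ and $\mu^2 f$ are multiplicative (here $\mu$ and hence $\mu^2$ are multiplicative, and $f$ is multiplicative by hypothesis); and (ii) the divisor-sum operator preserves multiplicativity, i.e.\ if $g$ is multiplicative then $G(n) := \sum_{d\mid n} g(d)$ is multiplicative. Fact (ii) is standard and holds because, for coprime $m$ and $n$, every divisor of $mn$ factors uniquely as a product of a divisor of $m$ and a divisor of $n$. Granting these, each side of each identity is a multiplicative function of $n$, so it suffices to verify the identities on prime powers $p^a$ and then multiply.

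First I would treat $\sum_{d\mid n}\mu(d)f(d)$. On a prime power $p^a$ the divisors are $1,p,p^2,\dots,p^a$, and since $\mu(p^k)=0$ for $k\ge 2$ while $\mu(1)=1$ and $\mu(p)=-1$, only the terms $d=1$ and $d=p$ survive. Using the normalization $f(1)=1$ (valid for any multiplicative $f$ not identically zero), this gives $\sum_{d\mid p^a}\mu(d)f(d) = f(1) - f(p) = 1 - f(p)$. Assembling over all prime divisors via multiplicativity yields $\sum_{d\mid n}\mu(d)f(d) = \prod_{p\mid n}(1-f(p))$, which is the first identity.

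The second identity follows by the same route with $\mu$ replaced by $\mu^2$. The only change is that $\mu^2(p^k)=1$ for $k\in\{0,1\}$ and $\mu^2(p^k)=0$ for $k\ge 2$, so on a prime power the surviving terms $d=1,p$ contribute $f(1)+f(p)=1+f(p)$; multiplicativity then gives $\sum_{d\mid n}\mu^2(d)f(d) = \prod_{p\mid n}(1+f(p))$.

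There is no serious obstacle here; the result is entirely routine once multiplicativity is in hand. The only points requiring a word of care are fact (ii)—that the divisor sum of a multiplicative function is multiplicative—and the convention $f(1)=1$; everything else is a direct prime-power evaluation resting on the vanishing of $\mu$ and $\mu^2$ on non-squarefree arguments.
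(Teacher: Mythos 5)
Your proof is correct. Note that the paper itself gives no argument for this statement: Lemma \ref{lm8.1} sits in Section \ref{sec8}, ``Some standard results,'' which the paper explicitly presents without proofs (deferring such material to Apostol's textbook), so there is no internal proof to compare against. Your argument --- observe that $\mu f$ and $\mu^2 f$ are multiplicative, that divisor sums of multiplicative functions are multiplicative, and then evaluate on prime powers where only $d=1$ and $d=p$ survive --- is precisely the canonical textbook proof, and your two points of care (the preservation of multiplicativity under divisor sums, and the normalization $f(1)=1$ for a multiplicative $f$ not identically zero) are exactly the right ones. The only alternative route worth mentioning is the equally short direct one: expand $\prod_{p\mid n}(1\pm f(p))$ and note that each resulting term is $\mu(d)f(d)$ (respectively $\mu^2(d)f(d)$) for a squarefree divisor $d$ of $n$, using multiplicativity of $f$ to collapse $f(p_1)\cdots f(p_k)$ into $f(p_1\cdots p_k)$; this avoids invoking the divisor-sum lemma but is otherwise the same computation.
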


\begin{lemma}
Let $f$ be a multiplicative function so that the series $\sum_n f(n)$ is absolutely convergent. Then the sum of the series
can be expressed as the following absolutely convergent infinite product over all primes,
\begin{equation}
\sum_{n=1}^{\infty} f(n) = \prod_{p} \{1+f(p)+f(p^2)+ \cdots\}. \nonumber
\end{equation}
\label{lm8.2}
\end{lemma}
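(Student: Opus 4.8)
The plan is to compare the full series $\sum_{n=1}^{\infty} f(n)$ with the finite partial products $P(x) := \prod_{p\le x}\{1+f(p)+f(p^2)+\cdots\}$ and to show that $P(x)$ tends to $\sum_{n=1}^{\infty} f(n)$ as $x\to\infty$. First I would observe that each factor $1+f(p)+f(p^2)+\cdots$ is a subseries of the absolutely convergent series $\sum_n f(n)$, and hence converges absolutely; consequently $P(x)$ is a product of \emph{finitely} many absolutely convergent series, and such a product may be multiplied out and its terms rearranged freely without affecting the sum.

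Next I would expand $P(x)$ and invoke multiplicativity. Since $f$ is multiplicative we have $f(1)=1$ (so the leading term $1$ in each factor is $f(p^0)$) and $f(p_1^{a_1}\cdots p_r^{a_r})=f(p_1^{a_1})\cdots f(p_r^{a_r})$ for distinct primes $p_i$. Thus each term obtained by selecting one summand $f(p^{a_p})$ from each factor equals $f(n)$, where $n=\prod_{p\le x}p^{a_p}$ is a positive integer all of whose prime divisors are $\le x$; by unique factorization this correspondence is a bijection onto the set $A(x)$ of such integers. Writing this out gives $P(x)=\sum_{n\in A(x)} f(n)$.

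Then I would estimate the error. Every integer $n\le x$ has all its prime factors $\le x$, so $n\le x$ implies $n\in A(x)$; equivalently, any $n\notin A(x)$ satisfies $n>x$. Therefore
\[
\left| \sum_{n=1}^{\infty} f(n) - P(x) \right| \;\le\; \sum_{n\notin A(x)} |f(n)| \;\le\; \sum_{n>x} |f(n)|.
\]
The right-hand side is the tail of a convergent series of nonnegative terms and so tends to $0$ as $x\to\infty$, which yields the stated identity. To confirm that the infinite product itself converges absolutely, it suffices to note that $\sum_{p}\sum_{k\ge 1}|f(p^k)|\le \sum_{n\ge 2}|f(n)|<\infty$, so the defining series $\sum_p \bigl|\,f(p)+f(p^2)+\cdots\bigr|$ converges.

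The main obstacle — indeed the only delicate point — is the rearrangement in the second step: justifying that multiplying out finitely many absolutely convergent series and regrouping the resulting products as a single sum indexed by $A(x)$ introduces no error. This is licensed by the standard fact that the finite multi-fold product of absolutely convergent series is absolutely convergent with sum equal to the product of the sums, combined with unique factorization, which guarantees the bijection between the tuples of chosen exponents and the integers of $A(x)$. Everything else reduces to the elementary observation that the tail of an absolutely convergent series vanishes.
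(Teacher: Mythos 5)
Your proof is correct. Note that the paper itself supplies no proof to compare against: Lemma \ref{lm8.2} sits in the section of ``standard results without their proofs'' (it is the classical Euler-product theorem for multiplicative functions, Theorem 11.6 in the cited book of Apostol). Your argument --- truncating to the finite product $P(x)$ over $p\le x$, expanding the finitely many absolutely convergent factors and using multiplicativity together with unique factorization to identify $P(x)=\sum_{n\in A(x)}f(n)$, then bounding the discrepancy by the tail $\sum_{n>x}|f(n)|$, and finally checking $\sum_p\bigl|f(p)+f(p^2)+\cdots\bigr|<\infty$ for absolute convergence of the product --- is precisely that standard textbook proof, and all the steps you flag as delicate (the rearrangement of a finite product of absolutely convergent series, the bijection with $A(x)$) are correctly justified. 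The only conventional point worth stating explicitly is that a multiplicative function is by definition not identically zero, which is what licenses your claim $f(1)=1$; with that convention in place your write-up is complete.
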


\begin{theorem}[Principle of cross-classification]
Let $S$ be a non-empty finite set and $N(T)$ denotes the number of elements of any subset $T$ of $S$. If $S_1, S_2, ..., S_n$ are given subsets of $S$, then
\begin{equation}
\begin{split}
N\left(S-\bigcup_{i=1}^r S_i\right) = & ~N(S) - \sum_{1\le i \le n}N(S_i) + \sum_{1\le i<j \le n}N(S_iS_j)\\ \nonumber
 &-\sum_{1\le i<j<k \le n}N(S_iS_jS_k)+ \cdots + (-1)^nN(S_1S_2\cdots S_n),
\end{split}
\end{equation}
where $S-T$ consists of those elements of $S$ which are not in $T$, and $S_iS_j$, $S_iS_jS_k$, etc. denote respectively $S_i\cap S_j$, $S_i\cap S_j\cap S_k$, etc.
\label{th8.2}
\end{theorem}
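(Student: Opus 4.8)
The plan is to prove the identity by a single pointwise computation with indicator functions, summed over the ground set $S$. For each $y \in S$, let $\mathbf{1}_{T}(y)$ denote the indicator that $y \in T$. The key observation is that $y$ lies in $S - \bigcup_{i=1}^{n} S_i$ exactly when it belongs to none of the $S_i$, which is captured by the pointwise product identity
\begin{equation}
\mathbf{1}_{S - \bigcup_{i} S_i}(y) = \prod_{i=1}^{n}\bigl(1 - \mathbf{1}_{S_i}(y)\bigr), \nonumber
\end{equation}
valid for every $y \in S$. First I would verify this directly: if $y \in S_j$ for some $j$, then the $j$-th factor vanishes and both sides are $0$; if $y$ lies in no $S_i$, then every factor equals $1$ and both sides are $1$.

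Next I would expand the product on the right by the distributive law, indexing the resulting terms by subsets $T \subseteq \{1,\ldots,n\}$ and using that a product of indicators is the indicator of the intersection (the empty intersection being $S$ by convention):
\begin{equation}
\prod_{i=1}^{n}\bigl(1 - \mathbf{1}_{S_i}(y)\bigr) = \sum_{T \subseteq \{1,\ldots,n\}} (-1)^{|T|} \prod_{i \in T} \mathbf{1}_{S_i}(y) = \sum_{T} (-1)^{|T|}\, \mathbf{1}_{\bigcap_{i \in T} S_i}(y). \nonumber
\end{equation}
Summing both displayed quantities over all $y \in S$ and interchanging the finite sums then yields
\begin{equation}
N\Bigl(S - \bigcup_{i=1}^n S_i\Bigr) = \sum_{T \subseteq \{1,\ldots,n\}} (-1)^{|T|}\, N\Bigl(\bigcap_{i \in T} S_i\Bigr), \nonumber
\end{equation}
which is the asserted formula once the subsets $T$ are grouped by cardinality: the term $T = \emptyset$ gives $N(S)$, the singletons give $-\sum_i N(S_i)$, the pairs give $+\sum_{i<j} N(S_iS_j)$, and so on, terminating with $(-1)^n N(S_1\cdots S_n)$.

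As an independent check, and an alternative route for anyone who prefers to avoid indicator-function manipulations, I would count the net contribution of a fixed $y \in S$ to the right-hand side according to the number $k$ of sets $S_i$ that contain it: such a $y$ is counted in exactly $\binom{k}{j}$ of the $j$-fold intersections, so its total contribution is $\sum_{j=0}^{k} (-1)^j \binom{k}{j} = (1-1)^k$, which equals $1$ when $k=0$ and $0$ when $k\ge 1$, matching the left-hand side. A third, entirely routine option is induction on $n$, peeling off $S_n$ via $N(S - \bigcup_{i=1}^n S_i) = N(S - \bigcup_{i=1}^{n-1} S_i) - N\bigl((S - \bigcup_{i=1}^{n-1} S_i) \cap S_n\bigr)$ and distributing the intersection over the union. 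There is no genuine obstacle here, since this is a standard result; the only point requiring care is the bookkeeping of the binomial cancellation (equivalently, the clean expansion of the product of indicators), which is precisely where the alternating signs of the cross-classification formula originate.
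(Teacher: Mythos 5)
Your proof is correct, but there is nothing in the paper to compare it against: Theorem \ref{th8.2} appears in Section \ref{sec8}, which the author explicitly prefaces with ``we will provide some useful standard results without their proofs.'' The principle of cross-classification is quoted there as a known result (it is a standard theorem, found for instance in Apostol's \emph{Introduction to Analytic Number Theory}, the paper's reference for such facts), and it is then \emph{used} in Section \ref{sec7} to prove Theorems \ref{th4.1} and \ref{th6.1}. Your indicator-function argument --- the pointwise identity $\mathbf{1}_{S - \bigcup_{i} S_i} = \prod_{i=1}^{n}\bigl(1 - \mathbf{1}_{S_i}\bigr)$, expanded by distributivity into a signed sum over subsets and then summed over $S$ --- is a complete and correct proof of this inclusion-exclusion statement, and your two cross-checks (the binomial cancellation $\sum_{j=0}^{k}(-1)^j\binom{k}{j} = 0$ for $k \ge 1$, and induction on $n$) are likewise sound. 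One cosmetic point worth noting: the theorem as printed has a typo, with $r$ as the upper limit of the union on the left and $n$ elsewhere; you silently read $r = n$, which is clearly the intended meaning and is how the paper itself applies the result.
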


\begin{theorem}
If $h(n)=\sum_{d|n}f(d)g(\frac nd)$, let\\
\begin{equation}
H(x) = \sum_{n\le x} h(n), ~~ F(x) = \sum_{n\le x} f(n), ~~ \text{and} ~~ G(x) = \sum_{n\le x} g(n). \nonumber
\end{equation}
We then have \\
\begin{equation}
H(x) = \sum_{n\le x} f(n) G(\frac xn) = \sum_{n\le x} g(n)F(\frac xn). \nonumber
\end{equation}
\label{th8.3}
\end{theorem}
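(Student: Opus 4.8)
The plan is to reduce everything to a single double sum over lattice points beneath the hyperbola $uv = x$ and then evaluate it in the two natural orders. Writing out the definitions,
$$H(x) = \sum_{n \le x} h(n) = \sum_{n \le x} \sum_{d|n} f(d)\, g(n/d).$$
The inner Dirichlet convolution ranges over ordered factorizations $n = de$ with $d, e \ge 1$, so substituting $e = n/d$ recasts the whole expression as a sum over all pairs of positive integers obeying a single product constraint:
$$H(x) = \sum_{de \le x} f(d)\, g(e).$$
This is purely a bookkeeping identity: each $n \le x$ contributes exactly the terms $f(d)g(e)$ coming from its factorizations $de = n$, and conversely every pair $(d,e)$ with $de \le x$ arises from the single value $n = de \le x$. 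Thus no term is gained or lost in passing to the symmetric form.

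With this symmetric form in hand, the two claimed expressions arise simply by choosing which variable to sum over first. Fixing $d$ and summing over $e$: for each $d$ (necessarily $d \le x$), the constraint $de \le x$ becomes $e \le x/d$, so the inner sum is $\sum_{e \le x/d} g(e) = G(x/d)$ by the definition of $G$. This gives $H(x) = \sum_{d \le x} f(d)\, G(x/d)$, which is the first stated identity after renaming the dummy variable $d$ as $n$. Fixing $e$ and summing over $d$ instead gives, by the identical argument with the roles of $f, g$ and $F, G$ interchanged, $H(x) = \sum_{e \le x} g(e)\, F(x/e)$, the second identity.

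The only step requiring attention—and the nearest thing to an obstacle—is justifying that both iterated sums equal the common double sum over $\{(d,e) : de \le x\}$. Since $x$ is fixed, only finitely many lattice points satisfy $de \le x$, so the double sum is a finite sum and its reordering is mere reassociation, with no convergence or Fubini subtlety to address. I would accordingly devote one line to noting the finiteness of the region $\{(d,e) : de \le x\}$ and observing that summing the inner variable over $e$ first versus over $d$ first must produce the same total; the two factorizations of that total are precisely the two asserted formulas for $H(x)$.
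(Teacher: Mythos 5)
Your proof is correct and is essentially the standard argument the paper relies on by citation (Apostol, Chapter 3): unfolding the convolution into the double sum $\sum_{de\le x} f(d)g(e)$ over lattice points under the hyperbola and summing in the two orders, with the finiteness of the region justifying the rearrangement. Nothing further is needed.
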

The proof of the theorem can be found in, for example, chapter 3 of Ref. \cite{apostol76}. 

\section{Biases in twin prime distribution}
\label{sec10}

Our analysis of the first 500 million prime numbers and corresponding about 30 million twin prime pairs 
($\pi(x_0)=5\times 10^8$ and $\pi_2(x_0)=29,981,546$) shows three different types of biases both in the distributions of the prime 
numbers and the twin prime pairs. For plotting different arithmetic functions, we take a data point after every 50 primes 
($x_i=p_{_{50i}}$) while presenting the results concerning prime numbers, and we take a data point after every 25 twin prime 
pairs ($x_i=\hat{p}_{_{25i}}$) while presenting the results concerning twin primes. In the following we present our findings.

\subsection{Type-I bias} If $\pi_2(x;4;1)$ and $\pi_2(x;4;3)$ represent the number of prime pairs $\le x$ in the residue class $a=1$ 
and $a=3$ respectively, we find that $\pi_2(x;4;1)>\pi_2(x;4;3)$ for the most values of $x$. This can be seen from the Table \ref{tab1}. 
The bias is also evident from Figure \ref{fig1} 
where we plot the functions  $\delta_2(x;4;3) = \frac{\pi_2(x;4;3)}{\pi_2(x)}$ and 
$\bar{\Delta}_2(x;4;3,1) = (\pi_2(x;4;3) - \pi_2(x;4;1))\frac{ln(x)}{10\sqrt{x}}$. In the definition of $\bar{\Delta}_2(x;4;3,1)$, 
the factor 10 in the denominator is an overall scaling factor. Ideally, without bias, $\delta_2(x;4;3)$ should be 0.5 
and $\bar{\Delta}_2(x;4;3,1)$ should be zero. We also plot the corresponding functions for the prime numbers; we see that, while the 
prime numbers are biased towards the residue class $a=3$, the twin prime pairs are in contrast biased towards the residue class $a=1$.

\begin{table}[th]
\renewcommand
\arraystretch{1.5}
\noindent\[
\begin{array}{r|r r r r r}
x & \pi(x;4;3) & \Delta(x;4;3,1) & \pi_2(x;4;3) & \Delta_2(x;4;3,1)\\
\hline
10^{7} & 332398 & 218 & 29498 & 16 \\
5\cdot10^{7} & 1500681 & 229 & 119330 & -441 \\
10^{8} & 2880950 & 446 & 219893 & -526 \\
5\cdot10^{8} & 13179058 & 2250 & 919192 & -1786 \\
10^{9} & 25424042 & 551 & 1711775 & -956 \\
5\cdot10^{9} & 117479241 & 4260 & 7308783 & -600 \\
10^{10} & 227529235 & 5960 & 13706087 & -505 \\
\end{array}
\]
\caption{(Type-I bias) Here biases in case of primes and twin primes are respectively quantified by the functions 
$\Delta(x;4;3,1)=\pi(x;4;3)-\pi(x;4;1)$ and $\Delta_2(x;4;3,1)=\pi_2(x;4;3)-\pi_2(x;4;1)$. } 
\label{tab1}
\end{table}

\begin{figure}[thb]
\includegraphics[width=0.83\linewidth]{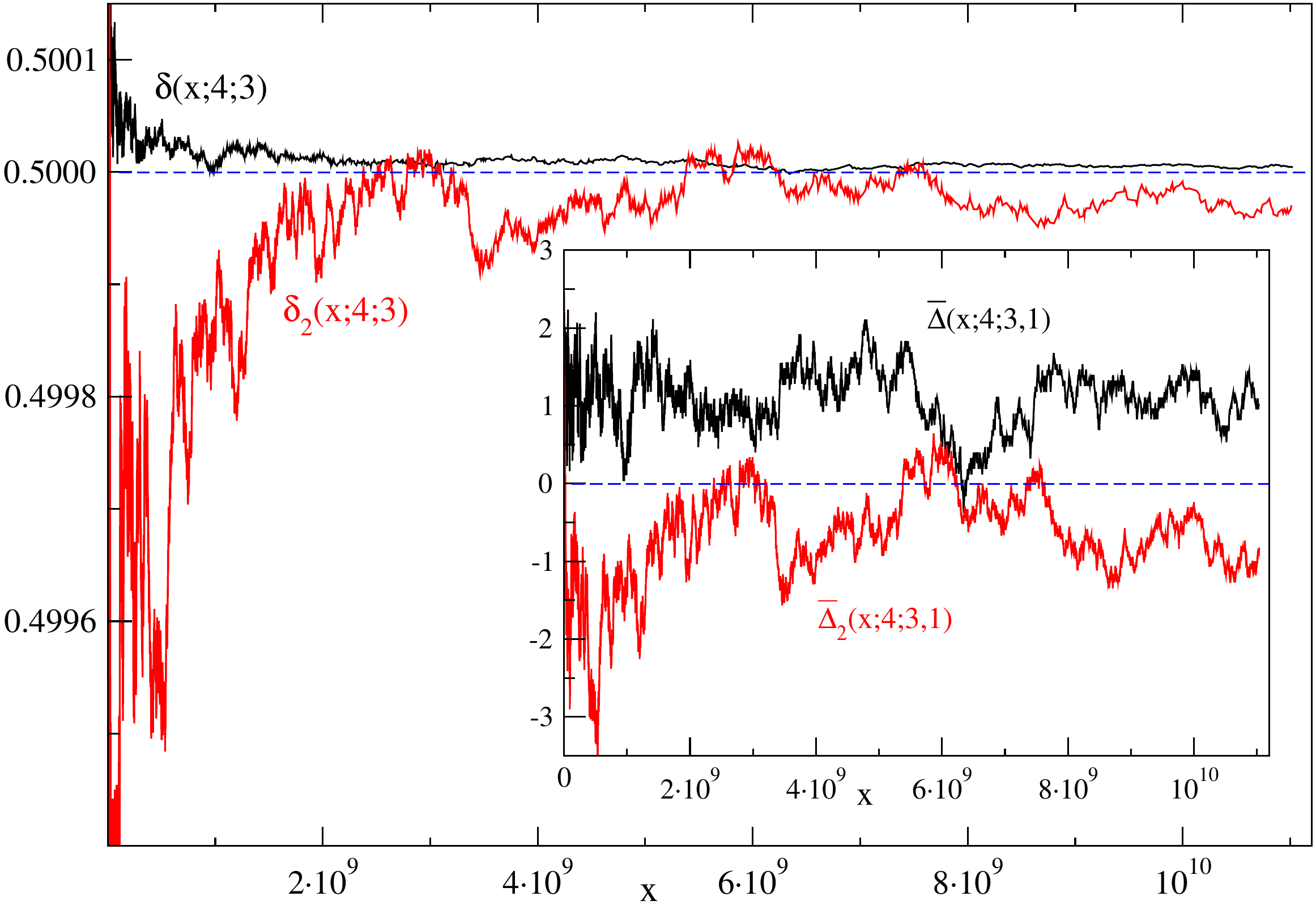}
\caption{(Type-I bias) Plots of the functions $\delta_2(x;4;3)$ and $\bar{\Delta}_2(x;4;3,1)$. The corresponding plots for the prime numbers are also 
shown. The broken lines show expected results if there were no bias.}
\label{fig1}
\end{figure}

Although the overall bias is seen to be towards the residue class $a=1$ for the prime pairs, there is an interval in between where the class $a=3$ is preferred. 
In fact we numerically find that upto about $x\approx50,000$, the residue class $a=1$ is preferred, then upto about $x\approx10^7$, the residue class $a=3$ is 
preferred. After this for a very long interval (we check upto about $x\approx 1.1\times 10^{10}$), the residue class $a=1$ is again preferred for the twin primes.

We also calculate the Brun's constant separately for the two residue classes. Let $B_2(x;4;1) =  (\frac15 +\frac17)+(\frac{1}{17} +\frac{1}{19}) + \cdots$ 
and $B_2(x;4;3) = (\frac13 + \frac15) + (\frac{1}{11}+\frac{1}{13})+\cdots$. Here the first series involves the twin pairs whose first members are congruent 
to 1 (mod 4), similarly, the second series involves the twin primes whose first members are congruent to 3 (mod 4). In both cases we only consider the prime 
pairs in $[1,x]$. We find that $B_2(x_0;4;1)\approx0.802233$ and $B_2(x_0;4;3)\approx0.985735$, where $x_0$ is given by $\pi_2(x_0)=29,981,546$. 
We note here that $B_2(x_0) = B_2(x_0;4;1) + B_2(x_0;4;3) \approx 1.787967$. The value of $B_2(x_0)$ is still somewhat far 
from its known value of $B_2\approx1.902161$; this is due to extremely slow convergence of the series of the reciprocals of twin primes. We find that 
$B_2(x_0;4;3)$ keeps a lead over $B_2(x_0;4;1)$ from the beginning (the first twin pair belongs to class $a=3$). Since $B_2-B_2(x_0)\approx0.114194$ and 
$B_2(x_0;4;3)-B_2(x_0;4;1)=0.183502$, it can be concluded that, irrespective of how large $x$ is, $B_2(x;4;3)$ will always be larger than $B_2(x;4;1)$.
In future study, it will be interesting to find out whether $lim_{x\rightarrow \infty} B_2(x;4;1)$ and 
$lim_{x\rightarrow \infty} B_2(x;4;3)$ exist and, if so, what the corresponding limits are. 

\begin{figure}[tb]
\includegraphics[width=0.80\linewidth]{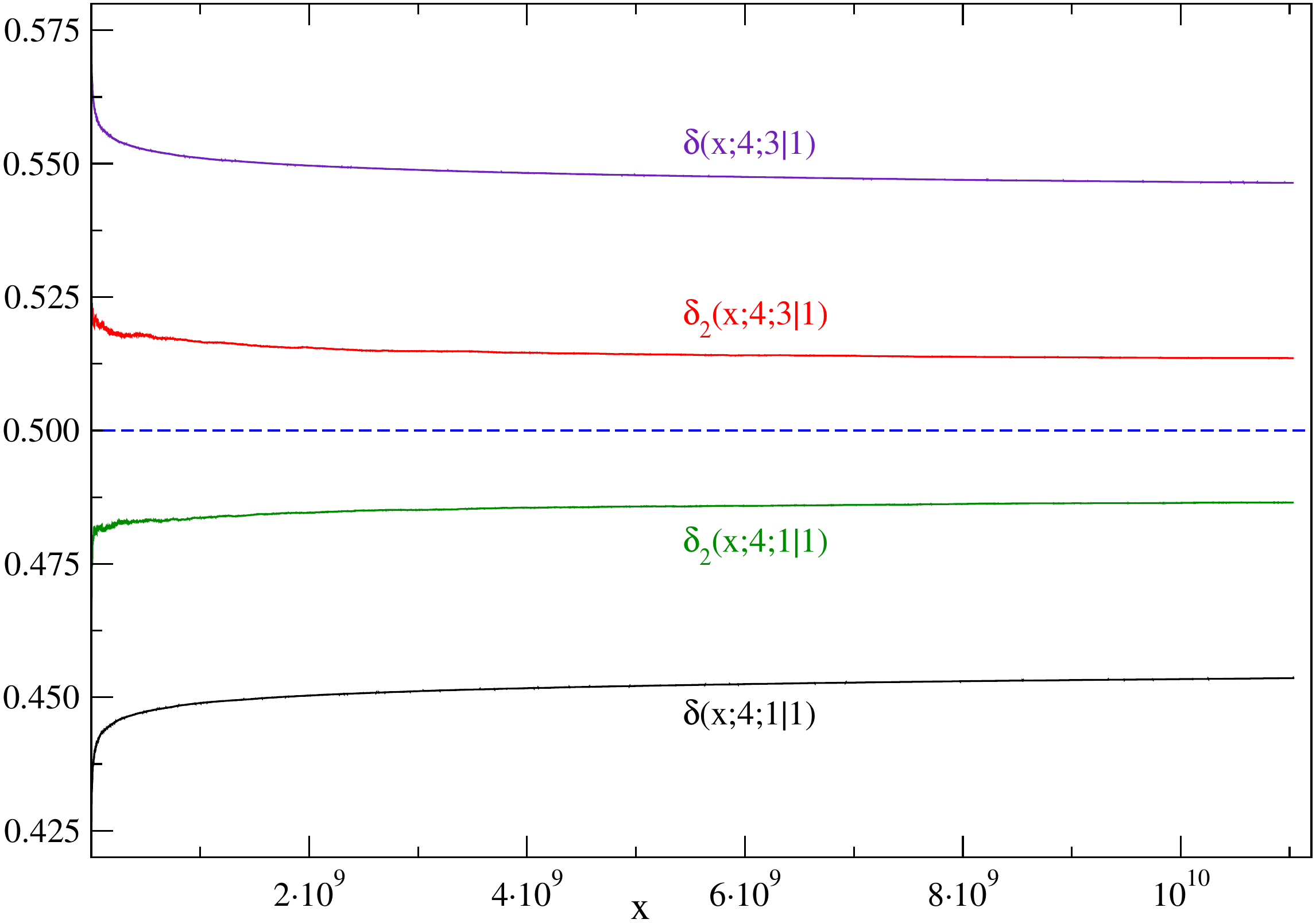}
\caption{(Type-II bias) Plots of the functions $\delta_2(x;4;1|1)$ and $\delta_2(x;4;3|1)$ along with their counterparts for the prime numbers are shown
here. Plots of the complementary functions, like $\delta_2(x;4;3|3) = 1 - \delta_2(x;4;3|1)$, are not provided. 
The broken line shows expected result if there were no bias.}
\label{fig2}
\end{figure}

\subsection{Type-II bias} A second type of bias can be found if we consider the consecutive twin prime pairs. Our numerical results 
show that after a prime pair of certain residue class, it is more probable to find the next prime pair to be from the different residue class. To quantify 
this bias, we define the following functions: $\delta_2(x;4;1|1):= \frac{\pi_2(x;4;1|1)}{\pi_2(x;4;1)}$ and 
$\delta_2(x;4;1|3):= \frac{\pi_2(x;4;1|3)}{\pi_2(x;4,1)}$. Here $\pi_2(x;4;a_i|a_j)$ denotes the number of twin prime pairs ($\le x$) 
belonging to the residue class $a_i$ provided that their next pairs belong to the class $a_j$. 
It may be noted that $\delta_2(x;4;1|1)+\delta_2(x;4;1|3)=1$. 
Functions $\delta_2(x;4;3|1)$ and $\delta_2(x;4;3|3)$ are defined similarly. The plots of function $\delta_2(x;4;1|1)$ and $\delta_2(x;4;3|1)$
can be found in Figure \ref{fig2}. The corresponding plots for the prime numbers are also shown in the figure. If we consider the twin prime 
pairs to be completely uncorrelated, the values of these functions should be 0.5. But we see that, for example, 
$\delta_2(x;4;1|3)>0.5>\delta_2(x;4;1|1)$ for all values of $x$ that we investigated. Exactly same behavior is seen in case of prime numbers, 
although the bias is more here. The bias in consecutive primes was investigated earlier in Ref. \cite{oliver16}.

We here would like to point out that the functions like $\delta_2(x;4;1|1)$ and $\delta_2(x;4;3|1)$ are quite regular and smooth compared  
to a function like $\delta_2(x;4,3)$ or $\bar{\Delta}_2(x;4;3,1)$. The values of the functions $\delta_2(x;4;1|1)$ and $\delta_2(x;4;3|1)$ for 
some values of $x$ can be found in Table \ref{tab2}.  
\begin{table}[]
\renewcommand
\arraystretch{1.5}
\noindent\[
\begin{array}{c|c c c c}
x & \delta(x;4;1|1) & \delta(x;4;3|1) & \delta_2(x;4;1|1) & \delta_2(x;4;3|1)\\
\hline
10^{7} & 0.4350 & 0.5647 & 0.4769  & 0.5228 \\
10^{8} & 0.4434 & 0.5565 & 0.4815  & 0.5198 \\
10^{9} & 0.4489 & 0.5511 & 0.4836 & 0.5166 \\
10^{10} & 0.4534 & 0.5466 & 0.4864 & 0.5136 \\
\end{array}
\]
\caption{(Type-II bias) Variation of functions $\delta(x;4;a_i|a_j)$ and $\delta_2(x;4;a_i|a_j)$ are very slow and smooth. 
Values of the functions are rounded off to the four decimal places.}
\label{tab2}
\end{table}

Although $\delta_2(x;4;a_i|a_j)$ varies very slowly, we expect all these functions to approach 0.5 as $x$ goes to infinity. Assuming that there are 
infinite number of twin prime pairs, we propose the following conjecture.
\begin{conjecture}
For any two residue classes $a_i$ and $a_j$, we have $\delta_2(x;4;a_i|a_j)=0.5$ as $x\rightarrow\infty$.
\label{cj10.1}
\end{conjecture}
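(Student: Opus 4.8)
The plan is to reduce the statement to an equidistribution claim for consecutive twin primes modulo 4 and then to attack that claim through the Hardy--Littlewood $k$-tuple framework, while acknowledging from the start that an unconditional proof is out of reach: even the infinitude of twin primes, which Conjecture \ref{cj3.1} presupposes, is open. First I would fix the bookkeeping. A twin-prime first member $\hat{p}>3$ satisfies $\hat{p}\equiv 1$ or $\hat{p}\equiv 3 \pmod 4$, and both are admissible classes; since $\phi_2(4)=\phi(4)=2$, Conjecture \ref{cj3.2} applied with $q=4$ already yields $\pi_2(x;4;1)\sim \pi_2(x;4;3)\sim \frac{1}{2}\pi_2(x)$. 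Writing $N_{ij}(x):=\pi_2(x;4;a_i|a_j)$ for the four transition counts, the row and column identities
\begin{equation}
N_{i1}(x)+N_{i3}(x)=\pi_2(x;4;a_i)+O(1),\qquad N_{1j}(x)+N_{3j}(x)=\pi_2(x;4;a_j)+O(1)\nonumber
\end{equation}
hold, the $O(1)$ absorbing the first pair and one possible successor just above $x$. Combined with the equal marginals $M:=\pi_2(x;4;1)\sim\pi_2(x;4;3)$, these force $N_{13}=N_{31}+O(1)$ and reduce the system to a single free parameter: setting $N_{11}(x)=t$ gives $N_{13}=N_{31}=M-t$ and $N_{33}=t$, all up to $O(1)$. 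The entire content of the conjecture is therefore that the residual correlation $t-\tfrac{1}{2}M$ is $o(\pi_2(x))$, equivalently that each $\delta_2(x;4;a_i|a_j)=t/M$ (or $1-t/M$) tends to $\frac{1}{2}$.

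Next I would attack the residual through the prime $k$-tuple heuristic, following the strategy of Oliver and Soundararajan \cite{oliver16}. A consecutive pair of twin primes with the $n$-th in class $a_i$ and the $(n+1)$-th in class $a_j$ corresponds to a configuration $(\hat{p},\hat{p}+g)$, where $g$ is the gap between consecutive twin-prime first members, subject to the constraint that no twin prime lies strictly between them. Under the Hardy--Littlewood conjecture the density of admissible such configurations is governed by a generalized singular series $\mathfrak{S}(g)$ that is insensitive to the residue of $\hat{p}$ modulo $4$, provided both $a_i$ and $a_i+g$ are admissible. Summing over the permissible gaps $g$, the leading term weights the classes $a=1$ and $a=3$ symmetrically, because the modulus $4$ enters the relevant local densities in a balanced way; this symmetry is precisely what should drive each $\delta_2(x;4;a_i|a_j)$ to $\frac{1}{2}$.

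Finally, as in the Chebyshev and Oliver--Soundararajan settings, the finite-$x$ bias observed here (for instance $\delta_2(x;4;1|3)>\frac{1}{2}>\delta_2(x;4;1|1)$) is expected to be a secondary-order effect, arising from lower-order terms in the count of twin-prime configurations and decaying like a negative power of $\log x$. The hard part, and the reason this must remain a conjecture, is twofold: one needs a quantitative distribution of twin primes in arithmetic progressions with an error term sharp enough to isolate the main term, and one needs genuine control over the correlations between \emph{consecutive} twin primes, that is, over the gap $g$ together with the ``no twin prime in between'' condition. Neither ingredient is available unconditionally. A realistic theorem would therefore be conditional on the Hardy--Littlewood conjecture augmented by a suitable level-of-distribution hypothesis, and even then the central obstacle is to upgrade the bound on the residual $t-\tfrac{1}{2}M$ from ``bounded'' to the genuinely quantitative $o(\pi_2(x))$ demanded by the statement.
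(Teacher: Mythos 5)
The statement you are attempting is Conjecture~\ref{cj10.1}; the paper itself offers no proof of it and could not, since its truth would in particular require the infinitude of twin primes. The paper's entire support for the conjecture is empirical (Table~\ref{tab2}, Figure~\ref{fig2}) together with the qualitative remark that the observed bias, while slowly varying, is expected to wash out as $x\rightarrow\infty$. Your proposal is therefore correct in the only sense available: you recognize that an unconditional proof is out of reach, you do not overclaim, and you supply a structured heuristic that the paper does not. Your route differs from the paper's in two useful ways. First, the row/column bookkeeping reducing the four transition counts $N_{ij}(x)$ to a single free parameter is a genuine addition: the identity $N_{13}=N_{31}+O(1)$ follows unconditionally from your two marginal identities, and it explains why the two off-diagonal curves in Figure~\ref{fig2} must mirror each other. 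Second, grounding the symmetry in the Hardy--Littlewood singular series, following \cite{oliver16}, gives a reason for the limit $\tfrac12$ rather than merely an expectation; the paper's discussion of Type-II bias gestures at \cite{oliver16} but does not carry out even this reduction.

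One imprecision is worth flagging. After setting $N_{11}(x)=t$ you conclude $N_{33}=t$ ``up to $O(1)$.'' That step uses equality of the marginals, but the correct identity is
\begin{equation}
N_{33}(x)=t+\bigl(\pi_2(x;4;3)-\pi_2(x;4;1)\bigr)+O(1),\nonumber
\end{equation}
and the marginal difference $\Delta_2(x;4;3,1)$ is emphatically not $O(1)$: Table~\ref{tab1} shows it reaching magnitudes in the thousands, and it is only conjecturally $o(\pi_2(x))$ (via Conjecture~\ref{cj3.2}, or even under a Riemann-type hypothesis one would expect fluctuations of size roughly $\sqrt{x}$ up to logarithms). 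So your ``single free parameter'' reduction is itself conditional on the Type-I equidistribution conjecture, and your final statement of what remains to be proved should list that dependence alongside the Hardy--Littlewood input and the control over the ``no twin prime in between'' constraint. With that caveat recorded, your account of why the statement must remain a conjecture is accurate and more informative than the paper's own discussion.
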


\subsection{Type-III bias} A third type of bias can be found if we study the gap between the first members of consecutive twin prime pairs. To
quantify the bias, we define the following functions: $\delta^{+}_2(x):=\frac{\pi^{+}_2(x)}{\pi_2(x)}$, where $\pi^{+}_2(x)$ is the number
of prime pairs ($\le x$) for each of which the following twin prime gap plus 1 is a prime number, i.e., 
$\pi^{+}_2(x) = \#\{\hat{p}_n \le x~| (\hat{p}_{n+1}-\hat{p}_n + 1) ~\text{is a prime} \}$.
\begin{table}[ht]
\renewcommand
\arraystretch{1.5}
\noindent\[
\begin{array}{c| c c c c}
x & \delta^{+}(x) & \delta^{-}(x) & \delta^{+}_2(x) & \delta^{-}_2(x) \\
\hline
10^{7} & 0.7418 & 0.6739 & 0.6673 & 0.7103 \\
10^{8} & 0.7203 & 0.6664 & 0.6401 & 0.6803 \\
10^{9} & 0.7026 & 0.6585 & 0.6186 & 0.6560 \\
10^{10} & 0.6875 & 0.6506 & 0.6001 & 0.6349 \\
\end{array}
\]
\caption{(Type-III bias) Variation of functions $\delta^{\pm}(x)$ and $\delta^{\pm}_2(x)$ are very slow and smooth. 
Values of the functions are rounded off to the four decimal places.}
\label{tab3}
\end{table}
Similarly, the function $\delta^{-}_2(x)$ is defined to quantify the bias in the twin prime gap minus 1. 
If the prime pairs were distributed in a completely random manner, both $\delta^{\pm}_2(x)$ would have been less than 0.5 since we have more 
odd composites than the primes. But what we find is that $\delta^{-}_2(x) > \delta^{+}_2(x) > 0.5$ for all the values of $x$ that we investigated.
Exactly similar bias is seen for the prime numbers, but in this case, interestingly, $\delta^{+}(x) > \delta^{-}(x) > 0.5$. 
The values of the functions $\delta^{\pm}_2(x)$ for some values of $x$ can be found in Table \ref{tab3}.
The results related to this particular type of bias can be seen in Fig. \ref{fig3}. 

\begin{figure}[tb]
\includegraphics[width=0.80\linewidth]{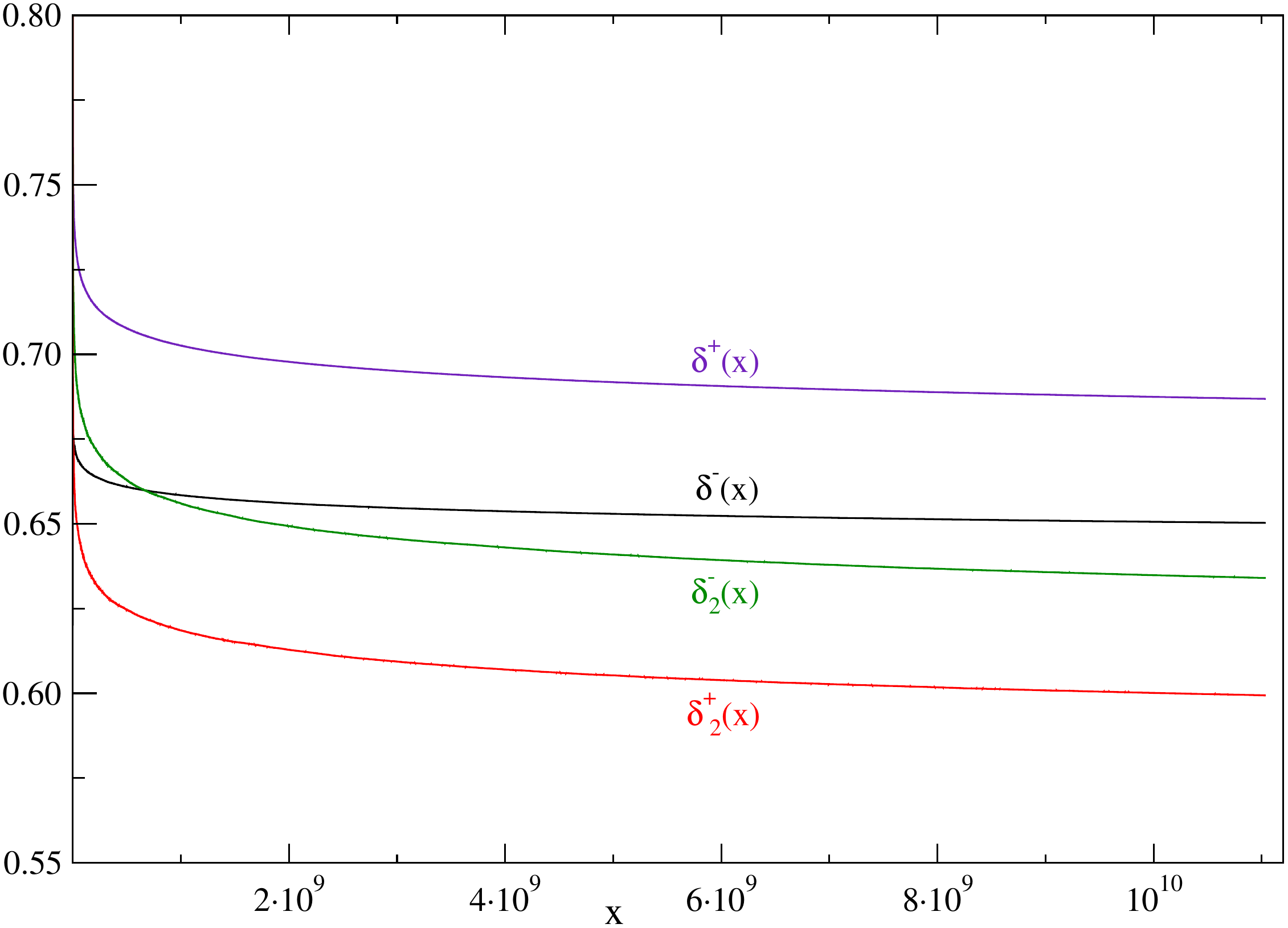}
\caption{(Type-III bias) Plots of the functions $\delta^{+}_2(x)$ and $\delta^{-}_2(x)$. The corresponding results for the prime numbers are also 
provided.}
\label{fig3}
\end{figure}

Although $\delta^{\pm}(x)$ and $\delta^{\pm}_2(x)$ change very slowly, we expect all these functions to eventually go to 0 since the number 
of odd composite numbers grow faster than the prime numbers. Assuming infinitude of twin primes, we propose the following conjecture for this 
type of bias.
\begin{conjecture}
As $x \rightarrow \infty$, $\delta^{+}(x) = \delta^{-}(x) = 0$ and $\delta^{+}_2(x) = \delta^{-}_2(x) = 0$.
\label{cj10.2}
\end{conjecture}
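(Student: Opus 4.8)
\textbf{Proof proposal for Conjecture \ref{cj10.2}.}

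The plan is to prove each of the four limits by a density (counting) argument that pits the growth rate of the prime-valued events against the growth rate of the ambient population, showing the favorable events form a vanishing fraction. Consider first $\delta^{+}(x)=\pi^{+}(x)/\pi(x)$. By definition $\pi^{+}(x)$ counts $p_n\le x$ for which $g_n+1$ is prime, where $g_n=p_{n+1}-p_n$ is the $n$-th prime gap. The key observation I would exploit is that the relevant quantity being tested for primality is the \emph{gap} $g_n\pm 1$, not the prime $p_n$ itself; and the typical gap grows like $g_n\sim \ln x$ by the prime number theorem, while gaps can be as large as a power of $x$ on the exceptional set. First I would partition the primes $p_n\le x$ according to the size of their gap $g_n$. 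On the bulk where $g_n$ is small (say $g_n\le (\ln x)^2$), the value $g_n\pm 1$ ranges over a bounded-by-$(\ln x)^2$ set of small odd numbers, and I would bound the number of such $n$ for which $g_n\pm 1$ happens to be prime by counting, for each admissible small prime value $v$, how many $n\le \pi(x)$ have $g_n=v\mp 1$.

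The central step is a \emph{mean-value / averaging} estimate. Since $\sum_{p_n\le x} g_n = p_{\pi(x)+1}-p_2 \sim x$ and there are $\pi(x)\sim x/\ln x$ gaps, the average gap is $\sim \ln x$, so gaps with $g_n+1$ equal to any fixed prime $v$ number at most $O(x/\ln x)$ but are heavily concentrated at small $v$. The crux is that the \emph{fraction} of $n$ for which $g_n\pm1$ is prime must be compared to the fraction for which $g_n\pm 1$ is an odd composite. Because the odd composites in any dyadic range $[y,2y]$ outnumber the primes there by a factor $\sim \ln y$ (prime number theorem), and because the gap distribution, suitably normalized, spreads out as $x\to\infty$ (the typical gap $\sim\ln x\to\infty$, so the tested values $g_n\pm1$ drift into ranges where primes are increasingly sparse), one expects the prime-valued events to lose out asymptotically. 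Concretely, I would try to show $\pi^{\pm}(x)=o(\pi(x))$ by establishing $\sum_{p_n\le x}\mathbf{1}[g_n\pm1\ \mathrm{prime}] = o(x/\ln x)$, splitting the sum at a threshold $T=T(x)\to\infty$ chosen so that below $T$ the primality constraint on $g_n\pm1$ forces $g_n$ into $O(T/\ln T)$ residue-like values each occurring with controllable frequency, and above $T$ the count of large-gap primes is itself $o(x/\ln x)$ by standard upper bounds on $\#\{n:g_n>T\}$.

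The twin-prime cases $\delta^{\pm}_2(x)=\pi^{\pm}_2(x)/\pi_2(x)$ proceed along the same lines but with the twin-prime gaps $\hat g_n=\hat p_{n+1}-\hat p_n$ replacing ordinary gaps and $\pi_2$ replacing $\pi$. Here I would invoke the Hardy–Littlewood asymptotic $\pi_2(x)\sim 2C_2\,x/(\ln x)^2$ (discussed around Equation \ref{eq_c2ro}) to fix the denominator's growth, and use $\sum_{\hat p_n\le x}\hat g_n\sim x$ to get the average twin-gap $\sim (\ln x)^2/(2C_2)\to\infty$; the identical averaging-versus-density dichotomy then applies, giving $\pi^{\pm}_2(x)=o(\pi_2(x))$.

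\textbf{Main obstacle.} The hard part will be controlling the \emph{joint} distribution of the gap value and its primality, i.e.\ rigorously bounding $\#\{n\le\pi(x): g_n=v\}$ for the relevant small values $v$ and summing the primality-weighted contribution. Unconditionally, we have no asymptotic formula for the frequency of a prescribed gap value (this is itself a deep open problem, closely tied to the Hardy–Littlewood $k$-tuple conjectures), so a fully rigorous proof of the stated limits appears out of reach with present technology; what is genuinely provable is the upper-bound half on the large-gap tail and the heuristic that small-gap primality events are asymptotically negligible because the typical gap $\to\infty$. Consequently I would frame the argument as a heuristic derivation supported by the averaging bounds above, consistent with the fact that the statement is labelled a conjecture, and I would flag the gap-frequency estimate as the single ingredient whose unconditional proof would upgrade the heuristic to a theorem.
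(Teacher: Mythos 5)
The statement you are treating is labelled a conjecture, and the paper in fact offers no proof at all: its entire justification is the numerics of Table \ref{tab3} plus the one-line heuristic that odd composites increasingly outnumber primes. Measured against that, your proposal is a legitimate refinement of the paper's heuristic — the split of primes by gap size, the trivial tail bound $\#\{n : g_n > T\} \le x/T$ coming from $\sum_{p_n \le x} g_n \le x$, and the observation that the tested values $g_n \pm 1$ drift into ranges of vanishing prime density are all sound, and your decision to frame the argument as heuristic matches the paper's own framing.

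However, your assessment of the obstacle contains a concrete error that makes you concede far too much. You claim the missing ingredient is control of $\#\{n \le \pi(x) : g_n = v\}$ and that this is out of reach because no asymptotic for prescribed gap frequencies is known. But your argument only ever needs an \emph{upper} bound, and a classical unconditional one exists: Brun's (or Selberg's) sieve gives, uniformly in even $g$, $\#\{p \le x : p+g \text{ prime}\} \ll \mathfrak{S}(g)\, x/(\ln x)^2$ with $\mathfrak{S}(g) = \prod_{p \mid g,\, p>2} \frac{p-1}{p-2} \ll \ln\ln g$, and $\#\{n : g_n = g\}$ is trivially at most this count (consecutiveness only shrinks it). Taking $T = \ln x \sqrt{\ln\ln x}$, the below-threshold contribution is $\ll \frac{x}{(\ln x)^2} \sum_{v \le T,\, v \text{ prime}} \mathfrak{S}(v \mp 1) \ll \frac{x}{\ln x} \cdot \frac{\ln\ln\ln x}{\sqrt{\ln\ln x}} = o\bigl(x/\ln x\bigr)$, while the tail is $x/T = o\bigl(x/\ln x\bigr)$; hence $\pi^{\pm}(x) = o(\pi(x))$ \emph{unconditionally}. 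In other words, the half of the conjecture asserting $\delta^{\pm}(x) \to 0$ is not merely heuristically plausible — it is a theorem, with a rate of order $1/\sqrt{\ln\ln x}$ that is consonant with the very slow decay in Table \ref{tab3}. The twin half runs identically using the four-tuple sieve bound $\#\{\hat{p} \le x : \hat{p},\, \hat{p}+2,\, \hat{p}+g,\, \hat{p}+g+2 \text{ all prime}\} \ll x/(\ln x)^4$ (up to a bounded-on-average singular series) with $T = (\ln x)^2\sqrt{\ln\ln x}$; there the only genuinely conjectural input is the lower bound $\pi_2(x) \gg x/(\ln x)^2$ for the denominator — i.e., precisely the twin-prime density the paper explicitly assumes when stating the conjecture — and not any gap-frequency estimate. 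So your single flagged obstacle is misidentified: the obstruction sits entirely in $\pi_2(x)$, and your heuristic for $\delta^{\pm}(x)$ could be upgraded to a full proof with tools that were available in 1920.
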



\bibliographystyle{amsplain}

\end{document}